\documentclass[12pt]{amsart}

\usepackage[english]{babel}
\usepackage[latin1]{inputenc}
\usepackage{amsmath,amssymb, 
enumerate,amssymb,wasysym,mathrsfs,dsfont,tabularx}
 \usepackage{mathptmx}
 \usepackage[all,cmtip]{xy}

 \DeclareMathAlphabet{\mathcal}{OMS}{cmsy}{m}{n}

\DeclareSymbolFont{rsfsript}{OMS}{rsfs}{m}{n}
\DeclareSymbolFontAlphabet{\mathrsfs}{rsfsript}

\DeclareSymbolFont{AMSb}{U}{msb}{m}{n}
\DeclareSymbolFontAlphabet{\mathbb}{AMSb}

\DeclareSymbolFont{eufrak}{U}{euf}{m}{n}
\DeclareSymbolFontAlphabet{\gothic}{eufrak}
\DeclareMathOperator{\op}{op}

\newcommand{\Dg}{\mathsf{DiGp}}

\newcommand{\Set}{\mathsf{Set}}
\newcommand{\DHp}{\mathsf{DHeap}}
\newcommand{\Sym}{\operatorname{Sym}}
\newcommand{\Aut}{\operatorname{Aut}}

\DeclareMathOperator{\E}{E}
\DeclareMathOperator{\U}{U}
\DeclareMathOperator{\I}{I}

\newcommand{\Grp}{\mathsf{Grp}}

\DeclareMathOperator{\Triv}{\bf Triv}

\newcommand{\id}{\operatorname{id}}
\newcommand{\N}{\mathbb N}

\newtheorem{theorem}{Theorem}[section]
\newtheorem{proposition}[theorem]{Proposition}
\newtheorem{lemma}[theorem]{Lemma}

\newtheorem{definition}[theorem]{Definition}
\newtheorem{remark}[theorem]{Remark}

\newtheorem{Ex}[theorem]{Example}
\newtheorem{remarks}[theorem]{Remarks}

\newcommand{\SGrp}{\mathsf{SGrp}}
\newcommand{\DSGp}{\mathsf{DSGp}}
\newcommand{\Hom}{\operatorname{Hom}}

\begin{document}
  \title[Digroups, their canonical pretorsion theory, and diheaps]{Digroups, their canonical pretorsion theory, \\ \protect and diheaps}
     \author{Alberto Facchini}
\address[Alberto Facchini]{Dipartimento di Matematica ``Tullio Levi-Civita'',\linebreak Universit\`a di 
Padova, 35121 Padova, Italy}
 \email{facchini@math.unipd.it}
\thanks{The second author is partially supported by GNSAGA,  the projects PIACERI ``PLGAVA-Propriet\`a locali e globali di anelli e di variet\`a algebriche'' and ``MTTAI - Metodi topologici in teoria degli anelli e loro ideali'' of University of Catania, and  the research project PRIN ``Squarefree Gr\"obner degenerations, special varieties and related topics''.} 
\author{Carmelo Antonio Finocchiaro}
\address[Carmelo Antonio Finocchiaro]{Dipartimento di Matematica e Informatica, Universit\`a\ di Catania, Citt\`a\ Universitaria, 95125 Catania, Italy}
\thanks{}
\email{cafinocchiaro@unict.it}

   \keywords{Digroup, pretorsion theory, heap, right group, generalized digroup.}

      \begin{abstract} In the category of digroups  there is a natural pretorsion theory in which the torsion-free digroups are all groups, and torsion digroups form a category isomorphic to the category of non-empty sets. It is also possible to extend the theory of heaps from groups to digroups. The corresponding notion is that of a diheap.\end{abstract}

    \maketitle

{\small 2020 {\it Mathematics Subject Classification.} Primary 18E40, 20N10. Secondary 20M07, 20M99, 20N05.}

\section{Introduction} 

Digroups were introduced by Felipe  \cite{Phillips} in 2006 (also see \cite{Kinyon}). The idea is replacing the operation of a group with two operations, a left multiplication $\vdash$ and a right multiplication $\dashv$. Something similar was done by the first author for skew braces and trusses in \cite{Fac, XTrusses}.

In this work we discuss some aspects of the theory of digroups, focusing in particular on their natural pretorsion theory and on the notion corresponding to that of a heap, namely the notion of a diheap.
A digroup is an algebra $(D,\vdash,\dashv)$, where $(D,\vdash)$ is a right group, $(D,\dashv)$ is a left group, $x\vdash (y\dashv z)=(x\vdash y)\dashv z$ for every $x,y,z\in D$, and there exists an element $e\in D$ which is idempotent (both in $(D,\vdash)$ and in $(D,\dashv)$) and is central in the sense that $x\vdash e=e\dashv x$ for every $x\in D$. Denoting by $\U(D)$ the set of idempotents of $D$, often called {\em bar-units} of $D$, and by $\I(D)$ the set of central idempotents of $D$, we have that a disemigroup $D$ is a digroup if and only if $\I(D)$ is a non-empty set. In a digroup, a fundamental role is played by a congruence $\sim$, which can be equivalently defined as the equivalence relation such that $x\sim y $ if $x \vdash e=y \vdash e$ for any idempotent element $e$ of $(D,\vdash)$, or as the equivalence relation such that $x\sim y$ if $e \dashv x=e \dashv y$ for any idempotent element $e$ of $(D,\dashv)$, or as the congruence on $(D,\vdash,\dashv)$ generated by the subset $\{\,(x\vdash y, x\dashv y)\mid x,y\in D\,\}$ of $D\times D$. Certain idempotent endomorphisms of the digroup $(D,\vdash,\dashv)$ play a special role. These are those of the form $\sigma_e\colon D\to D$, where $\sigma_e(x)=x\vdash e$ for every $x\in D$ and every central idempotent $e$ of $D$. There exists a pretorsion theory in the category of digroups where the torsion-free digroups are those of the form $(G,\cdot,\cdot)$ with $(G,\cdot)$  being a group, and where the torsion digroups are all and only those of the form $(X,\vdash,\dashv)$, where $X$ is a non-empty set and $\vdash,\dashv$ are defined by $x\vdash y=y$ and $x\dashv y=x$ for every $x,y\in X$. The preexact sequence corresponding to any digroup $D$
is the sequence $\U(D)\to D\to D/{\sim}$.

Finally we show that the notion of a heap, a ``group with the identity element forgotten'', can be adapted to digroups, for which we find the notion a diheap, an algebra with two ternary operations. 

\section{Preliminary notions on disemigroups and right groups}

\subsection{Disemigroups}
\begin{definition}\label{disemigroup}{\rm A {\em disemigroup} $(D, \vdash,\dashv)$ is a set $D$ together with two binary operations $ \vdash$ and $\dashv$ such that:

(a) $(D,\vdash)$ and $(D,\dashv)$ are semigroups; 

(b) $x\vdash (y\dashv z)=(x\vdash y)\dashv z$;

(c) $x\dashv (y\vdash z)=x\dashv (y\dashv z)$;

(d) $(x\dashv y)\vdash z=(x\vdash y)\vdash z$

\noindent for every $x,y,z\in D$.}\end{definition}

We will call the operation $\vdash$ {\em the left multiplication} and the operation $\dashv$ {\em the right multiplication}  of the disemigroup $D$.

\bigskip

Disemigroup morphisms are the mappings that preserve both operations $ \vdash$ and $\dashv$. 
\begin{definition}{\rm An element $e$ in a disemigroup $D$ is a {\em bar-unit} if $e\vdash x=x\dashv e=x$ for all $x\in D$. 
 We will denote by $\U(D,\vdash,\dashv)$ the set of all bar-units of the disemigroup $D$ (in case there is no danger of confusion we will simply write $\U(D)$). The set $\U(D)$ is sometimes called the {\em halo} of the disemigroup~$D$.}\end{definition}

 For any semigroup $(S,\cdot)$, let $\E(S,\cdot)$ denote the set of all idempotent elements $(S,\cdot)$. Hence, is a disemigroup $(D,\vdash,\dashv)$, we have that $$\U(D,\vdash,\dashv)\subseteq\E(D,\vdash)\cap\E(D,\dashv).$$
\begin{remark}\label{simmetria}{\rm There is an involutory automorphism of the category of disemigroups that associates to each disemigroup $(D, \vdash,\dashv)$ the disemigroup $(D, \dashv^{\op},\vdash^{\op})$ and is the identity on morphisms.}
\end{remark}

The first example of disemigroup is, for any semigroup $(S,\cdot)$, the disemigroup $(S,\cdot,\cdot)$. More generally, we have the following straightforward result:

\begin{lemma}\label{2.4} There is functor $F\colon {\SGrp}\to\DSGp$ of the category ${\SGrp}$ of semigroups into the category $\DSGp$ of disemigroups that associates to every semigroup $(S,\cdot)$ the disemigroup $(S,\cdot,\cdot)$. It is full and faithful, and induces a canonical category isomorphism of the category ${\SGrp}$ of semigroups into the full subcategory of $\DSGp$ whose objects are the disemigroups $(D, \vdash,\dashv)$ for which the two operations $\vdash$ and $\dashv$ coincide (that is, $x\vdash y=x\dashv y$ for every $x,y\in D$).\end{lemma}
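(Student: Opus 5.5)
We verify directly from Definition~\ref{disemigroup} that $(S,\cdot,\cdot)$ is a disemigroup for every semigroup $(S,\cdot)$. Condition (a) is just the hypothesis that $(S,\cdot)$ is a semigroup. Once both operations are replaced by $\cdot$, conditions (b), (c) and (d) all reduce to the associativity law: (b) becomes $x(yz)=(xy)z$, while (c) and (d) become the tautologies $x(yz)=x(yz)$ and $(xy)z=(xy)z$. On morphisms we set $F(f)=f$. A semigroup morphism $f\colon(S,\cdot)\to(T,\cdot)$ preserves $\cdot$, hence preserves both operations of $(S,\cdot,\cdot)$, so it is a disemigroup morphism $(S,\cdot,\cdot)\to(T,\cdot,\cdot)$. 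Since $F$ is the identity on underlying maps, it preserves identities and composition, and it is therefore a functor.

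Faithfulness is immediate, because $F$ does not change the underlying maps. For fullness, take a disemigroup morphism $g\colon(S,\cdot,\cdot)\to(T,\cdot,\cdot)$. It preserves $\vdash=\cdot$, so it is already a semigroup morphism $S\to T$, and $F(g)=g$. Injectivity on objects also holds: if $(S,\cdot,\cdot)=(S',\cdot',\cdot')$, then $S=S'$ and $\cdot=\cdot'$.

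Let $\mathcal C$ be the full subcategory of $\DSGp$ whose objects are the disemigroups $(D,\vdash,\dashv)$ with $\vdash=\dashv$. Every $F(S)$ obviously lies in $\mathcal C$. Conversely, take $(D,\vdash,\dashv)$ in $\mathcal C$. By (a), $(D,\vdash)$ is a semigroup, and $(D,\vdash,\dashv)=(D,\vdash,\vdash)=F(D,\vdash)$. So $F$ is bijective on objects onto $\mathcal C$. Being full and faithful, it is bijective on hom-sets, and hence the corestriction $F\colon\SGrp\to\mathcal C$ is an isomorphism of categories. Its inverse is the forgetful functor $(D,\vdash,\vdash)\mapsto(D,\vdash)$, which is the identity on morphisms.

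There is no real obstacle here. The only point needing care is to check that the axioms (b)–(d) collapse exactly to associativity when the two operations coincide, so that no extra condition is imposed on the semigroup.
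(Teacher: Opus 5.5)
Your proof is correct. When $\vdash=\dashv=\cdot$, axioms (b)--(d) of Definition~\ref{disemigroup} collapse to associativity and two tautologies, and full faithfulness together with bijectivity on objects onto the full subcategory gives the isomorphism; the paper gives no proof, calling the lemma straightforward, and your argument is the routine check it leaves to the reader.
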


In view of the previous lemma, we will call {\em semigroups}  the disemigroups $(D, \vdash,\dashv)$ for which the two operations $\vdash$ and $\dashv$ coincide.

\begin{definition} {\rm Let $(D, \vdash,\dashv)$ be a disemigroup. For a given bar-unit $e\in D$ and any element $x\in D$, a {\em simultaneous inverse} of $x$ with respect to $e$ is an element $x^{\dagger_e}\in D$ such that $x\vdash x^{\dagger_e}=x^{\dagger_e} \dashv x=e$. 
We will denote by $\I(D,\vdash,\dashv)$ (or by $\I(D)$ when there is no danger of confusion) the subset of $\U(D,\vdash,\dashv)$ consisting of all the bar-units $e$ such that any element of $D$ admits a simultaneous inverse with respect to $e$. }	\end{definition}

Therefore, in a given disemigroup $(D,\vdash,\dashv)$, we have that $$\I(D,\vdash,\dashv)\subseteq\U(D,\vdash,\dashv)\subseteq\E(D,\vdash)\cap\E(D,\dashv).$$

\subsection{Right groups} A semigroup $(S,\cdot)$ is a {\em right zero semigroup} \cite[p.~4]{CPI} if $a\cdot b=b$ for all $a,b\in S$. For these semigroups we will usually write the operation $\cdot$ as $\pi_2$, because it corresponds to the second canonical projection $\pi_2\colon S\times S\to S$. Thus $a\,\pi_2\, b=b$. Similarly, $a\,\pi_1\, b=~a$. Hence right zero semigroups are those of the form $(S,\pi_2)$ for some set $S$. The full subcategory of the category of semigroups whose objects are all right zero semigroups is clearly isomorphic to the category $\Set$ of sets, because every mapping between two right zero semigroups is a semigroup morphism.

\bigskip

For every semigroup~$(S,\cdot)$
it is possible to define a mapping $$\ell\colon  S\to S^S,\qquad s \mapsto \ell_s,$$  where 
$\ell_s(t)=s\cdot t$ for every $t\in S$. This mapping  $\ell\colon  S\to S^S$ is a semigroup morphism. The semigroup $(S,\cdot)$ is  {\em left cancellative} if all the mappings $\ell_s\colon S\to S$ ($s\in S$) are injective; \emph{right simple} if all the mappings $\ell_s\colon S\to S$ are surjective;
a {\em right group} if all the mappings $\ell_s\colon S\to S$ are bijective. If $(S,\cdot)$ is a right group, all the mappings $\ell_s\colon S\to S$ have inverse mappings  $(\ell_s)^{-1}\colon S\to S$, hence it is possible to define another binary operation $\backslash $ on the set $S$ setting $s\backslash t := (\ell_s)^{-1}(t)$ for all $s,t\in S$. We will say that the operation $\backslash$ is the {\em left  inverse} of the operation $\cdot$ of the right group $S$. Since $\ell_s$ and $(\ell_s)^{-1}$ are mutually inverse mappings $S\to S$, it follows that $s\cdot (s\backslash t) = t = s\backslash (s\cdot t)$ for all $s,t\in S$. Conversely, let $S$ be a nonempty set endowed with two binary operations $\cdot, \backslash$ such that 
\begin{enumerate}
    \item $x\cdot(y\cdot z)=(x\cdot y)\cdot z$; 
    \item $x\cdot (x\backslash y)=y$;
    \item $x\backslash (x\cdot y)=y$,
\end{enumerate}
for all $x,y,z\in S$. Then $(S,\cdot)$ is a right group. Indeed (1) is equivalent to saying that $\cdot$ is associative, (2) implies that all the mappings $\ell_s$ ($s\in S$) are surjective and (3) that all the mappings $\ell_s$ are injective, since if $t,u\in S$ satisfy $s\cdot t=s\cdot u$, then
$$
t=s\backslash(s\cdot t)=s\backslash (s\cdot u)=u. 
$$

Hence, the variety of right groups can be defined as the variety of all algebras $(S,\cdot,\backslash)$, where $S$ is a set and $\cdot,\backslash$ are two binary operations on $S$ satisfying the identities $x\cdot(y\cdot z)=(x\cdot y)\cdot z$, \ $x\cdot (x\backslash y)=y$ and $x\backslash (x\cdot y)=y$. 

\begin{lemma}\label{1.2} {\rm \cite[Lemma~1.26]{CPI}} {\rm (a)} Every idempotent of a right simple semigroup $S$ is a left identity for $S$. 

{\rm (b)} Every idempotent of a left cancellative semigroup $S$ is a left identity for $S$.
\end{lemma}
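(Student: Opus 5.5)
We prove both parts by the same device: an idempotent $e$ is always a left identity on the elements of the form $e\cdot t$, since $e\cdot(e\cdot t)=(e\cdot e)\cdot t=e\cdot t$. In each part we reduce an arbitrary $s\in S$ to this situation.

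For (a), let $e\in S$ be an idempotent of the right simple semigroup $S$ and fix any $s\in S$. Right simplicity says the map $\ell_e\colon S\to S$ is surjective, so there is $t\in S$ with $s=e\cdot t$. Associativity and $e\cdot e=e$ then give
$$e\cdot s=e\cdot(e\cdot t)=(e\cdot e)\cdot t=e\cdot t=s.$$
Since $s$ was arbitrary, $e$ is a left identity for $S$.

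For (b), let $e$ be an idempotent of the left cancellative semigroup $S$ and fix $s\in S$. By associativity and idempotency,
$$e\cdot(e\cdot s)=(e\cdot e)\cdot s=e\cdot s.$$
Rewriting the right-hand side, this reads $\ell_e(e\cdot s)=\ell_e(s)$. Left cancellativity says $\ell_e$ is injective, so $e\cdot s=s$. Again $s$ was arbitrary, so $e$ is a left identity.

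There is no real obstacle here. The only point needing care is to apply surjectivity in (a), respectively injectivity in (b), to the specific map $\ell_e$ determined by the idempotent, and not to some other $\ell_s$. As a consequence, in a right group, where every $\ell_s$ is bijective, both parts apply and every idempotent is a left identity.
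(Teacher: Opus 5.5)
Your proof is correct and follows essentially the same approach as the paper. In (a) you write $s=e\cdot t$ using surjectivity of $\ell_e$ and compute $e\cdot s=(e\cdot e)\cdot t=s$, and in (b) you cancel $e$ on the left in $e\cdot(e\cdot s)=e\cdot s$, exactly as the paper does.
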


\begin{proof} {\rm (a)} Assume $S$ right simple. If $e,x\in S$ and $e^2=e$, then $x=ey$ for some $y\in S$, so $ex=e(ey)=ey=x$.

{\rm (b)} Suppose $S$ left cancellative. If $e,x\in S$ and $e^2=e$, then $e(ex)=ex$ implies $ex=x$ because $S$ is left cancellative. 
\end{proof}

\begin{theorem}\label{1.1}  {\rm \cite[Section~1.11, Theorem~1.27]{CPI}} The following assertions are equivalent for a semigroup $S\ne\emptyset$:

{\rm (a)} $S$ is a right group.

{\rm (b)} $S$ is right simple and left cancellative.

{\rm (c)} For every $a,b\in S$ there exists a unique element $x\in S$ such that $ax=b$.

{\rm (d)} $S$ is right simple and contains an idempotent.

{\rm (e)} $S$ is isomorphic to the external direct product of a group $G$ and a non-empty right zero semigroup $E$.

{\rm (f)} There exists an element $e\in S$ such that: {\rm (1)}  $e$ is a left identity  for $S$, and {\rm (2)} every element of $S$ has a right inverse with respect to $e$.

{\rm (g)} {\rm (1)} $S$ has a left identity, and {\rm (2)} for every left identity $e$ of $S$ and every element $a\in S$, $a$ has a right inverse with respect to $e$.\end{theorem}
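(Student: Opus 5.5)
I will prove the equivalences by a cycle of implications, using Lemma~\ref{1.2} to pass from idempotents to left identities. Throughout, $S\neq\emptyset$ and juxtaposition denotes the product.

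The implications among (a), (b) and (c) are immediate from the definitions. A right group is exactly a semigroup in which every $\ell_a$ is bijective. This is (b), since right simple means each $\ell_a$ is surjective and left cancellative means each $\ell_a$ is injective. It is also (c), since the unique solution of $ax=b$ is $\ell_a^{-1}(b)$.

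Next I show (b)$\Rightarrow$(d). Pick $a\in S$. By right simplicity there is $e$ with $ae=a$. Then $a(ee)=(ae)e=ae=a=ae$, and left cancellation gives $ee=e$. So $S$ contains an idempotent.

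For (d)$\Rightarrow$(f), let $e$ be an idempotent. By Lemma~\ref{1.2}(a), $e$ is a left identity. Right simplicity gives, for each $a$, some $a'$ with $aa'=e$, so every element has a right inverse with respect to $e$.

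For (f)$\Rightarrow$(g), the hypothesis already gives a left identity. Now let $f$ be any left identity and $a\in S$. Choose $a'$ with $aa'=e$ and put $b=a'f$. Then $ab=(aa')f=ef=f$, so $b$ is a right inverse of $a$ with respect to $f$.

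For (g)$\Rightarrow$(a), fix a left identity $e$. Given $a,b$, take $a'$ with $aa'=e$; then $a(a'b)=eb=b$, so $\ell_a$ is surjective. For injectivity, suppose $ax=ay$. Let $a''$ be a right inverse of $a'$, so $a'a''=e$. Then
\[
a'a=(a'a)e=a'a(a'a'')=a'(aa')a''=a'ea''=a'a''=e .
\]
Hence $x=ex=a'ax=a'ay=ey=y$. This yields (a), closing the cycle (a)$\Leftrightarrow$(b)$\Leftrightarrow$(c)$\Rightarrow$(d)$\Rightarrow$(f)$\Rightarrow$(g)$\Rightarrow$(a).

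It remains to bring (e) into the cycle. The direction (e)$\Rightarrow$(c) is straightforward. In $G\times E$ with $E$ right zero, $(g,u)(h,v)=(gh,v)$, so $(g,u)x=(k,w)$ has the unique solution $x=(g^{-1}k,w)$.

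The main work is (a)$\Rightarrow$(e). Let $E=\E(S)$; it is nonempty by (d) and is a right zero semigroup, because each idempotent is a left identity, so $ef=f$. Fix $e\in E$ and set $G=Se$.
\begin{itemize}
\item $G$ is closed under multiplication, since $(xe)(ye)=x(ey)e=xye$.
\item $e$ is a two-sided identity on $G$: on the left because it is a left identity, and on the right because $(xe)e=xe$.
\item Each $xe\in G$ has an inverse $x'e$ in $G$, where $(xe)x'=e$ is the right inverse; this gives a right inverse in a monoid where all elements have right inverses, so $G$ is a group.
\end{itemize}
Define $\varphi\colon S\to G\times E$ by $\varphi(s)=(se,\,s^{-1}s)$. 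Here $s^{-1}$ is the inverse of $se$ in $G$, and I must check that the second component is idempotent.

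I expect the cleaner route is to find, for each $s$, the unique idempotent $f_s$ with $sf_s=s$; it exists and is unique because $\ell_s$ is bijective and $sf=s$ forces $f$ to be idempotent. Then take $\varphi(s)=(se,f_s)$, with inverse map $(g,f)\mapsto gf$. Verifying that this is a homomorphism and bijective, in particular that $f_{st}=f_t$ and $f_{gf}=f$, is the main obstacle. It is a routine computation with left cancellation.
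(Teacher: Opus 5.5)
\textbf{The gap is in the injectivity half of (g)$\Rightarrow$(a).} The paper gives no proof of this theorem; it quotes it from Clifford--Preston, so your argument has to stand on its own. Your cycle (a)$\Leftrightarrow$(b)$\Leftrightarrow$(c)$\Rightarrow$(d)$\Rightarrow$(f)$\Rightarrow$(g)$\Rightarrow$(a), plus (e), is a sound plan. Every link checks out except that one, and it is the only link closing the cycle. In the chain
\[
a'a=(a'a)e=a'a(a'a'')=a'(aa')a''=a'ea''=a'a''=e
\]
the first equality treats the left identity $e$ as a right identity, which it is not in a right group. The remaining equalities only prove $(a'a)e=e$.

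The conclusion $a'a=e$ is in fact false whenever $S$ has two distinct idempotents $e\neq f$. Take $a=f$. Its right inverse with respect to $e$ is $a'=e$, since $fe=e$, and then $a'a=ef=f\neq e$. Concretely, in $G\times E$ with $e=(1,u)$ and $a=(g,v)$ for $u\neq v$, you get $a'=(g^{-1},u)$ and $a'a=(1,v)$. So the step $x=ex=a'ax$ is unjustified.

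\textbf{The repair is short.} The element $a'a$ is idempotent, because $(a'a)(a'a)=a'(aa')a=a'ea=a'a$. You have just shown that every $\ell_s$ is surjective, so $S$ is right simple. Lemma~\ref{1.2}(a) then makes $a'a$ a left identity, and $ax=ay$ gives
\[
x=(a'a)x=a'(ax)=a'(ay)=(a'a)y=y .
\]
The auxiliary element $a''$ becomes unnecessary.

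\textbf{Your sketch of (a)$\Rightarrow$(e) is correct.} Your first formula is the same map as the second. With $ss'=e$ one has $(se)^{-1}=s'e$ in $Se$, so $(se)^{-1}s=s's$. This element is idempotent and satisfies $s(s's)=s$, hence it equals $f_s$.

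The checks you defer are immediate:
\begin{itemize}
\item $f_{st}=f_t$, because $(st)f_t=st$.
\item Multiplicativity follows from $(se)(te)=s(et)e=ste$.
\item $(se)f_s=s(ef_s)=sf_s=s$.
\item For $g\in Se$ and $f\in E$, $(gf)e=g(fe)=ge=g$, and $f_{gf}=f$ because $(gf)f=gf$.
\end{itemize}
This is exactly the splitting $s\mapsto(\sigma_e(s),\tau(s))$, with $\tau(s)=s\backslash s$, that the paper describes right after the statement.
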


Notice that if $S$ a left cancellative semigroup that contains an idempotent, then we cannot deduce that $S$ is a right group, as the example of the additive monoid $\N_0$ shows. Cf.~Theorem \ref{1.1}(d).

\bigskip

Some care is necessary as far as 
the direct-product representation of a right group $(S,\cdot)$ as the direct product of a group $G$ and a non-empty right zero semigroup $E=\E(S,\cdot)$ in 
Theorem~\ref{1.1}(e) is concerned. Namely, for every $e\in E$, there is an idempotent endomorphism of $S$ defined by setting $\sigma_e(s) = s \cdot e$ for every $s \in S$. The image of $\sigma_e$ is the subgroup $S \cdot e$ of $S$, which is a group isomorphic to $G$.

Also, there is an idempotent endomorphism $\tau$ of the semigroup $S$ defined by $\tau(s)=s\backslash s$ for every $s\in S$. The image of $\tau$ is the subsemigroup $E$ of $S$.  The semigroup $S$ is the product of $G$ and $E$ in the category of semigroups, and, for every $e\in E$, the pointed semigroup $(S,\cdot, e)$ is the coproduct of $(S\cdot e,\cdot, e)$ and $(E,\pi_2,e)$ in the category of pointed right groups \cite[Proposition~3.1]{FacFin}. 
Thus $(S,\cdot, e)$ is the biproduct of $(S\cdot e,\cdot, e)$ and $(E,\pi_2,e)$ in the category of pointed right groups. 

For a right group $(S,\cdot)$, let $E$ denote the set of all idempotents of $S$, and
 fix an element $e_0\in E$. Define an equivalence relation $\sim $ on $S$ setting, for every $a,b\in S$, $a\sim b$ if $a\cdot e_0=b\cdot e_0$. The equivalence $\sim$ does not depend on the choice of the idempotent $e_0$, since, for every $a,b\in S$ and every $e\in E$, one has that $a\cdot e=b\cdot e$ if and only if $a\cdot e_0=b\cdot e_0$. If fact, if $a\cdot e=b\cdot e$, multiplying by $e_0$ on the right, one gets that $a\cdot e_0=b\cdot e_0$. Similarly, $a\cdot e_0=b\cdot e_0$ implies $a\cdot e=b\cdot e$.
 
The equivalence $\sim$ is compatible with both the operations $\cdot$ and $\backslash$ on $S$. 
The equivalence class of any $a\in S$ modulo $\sim$ is $a\cdot E:=\{\,a\cdot e\mid e\in E\,\}$, so that there is a partition $\{\,a\cdot E\mid a\in S\,\}$ of $S$. A complete irredundant set of representatives of the congruence classes of $S$ modulo $\sim$ is the set $S\cdot e_0$. The groups $S\cdot e_0$ and $S/{\sim}$ are canonically isomorphic.

\begin{proposition}\label{inv} {\rm \cite[Proposition 4.3]{FacFin}} For a right group $S$, there is a one-to-one correspondence between the set of the semigroup morphisms that are right inverses of the canonical projection $\pi\colon S\to S/{\sim}$ and the set $\E(S)$. If $e_0\in \E(S)$, the right inverse homomorphism of $\pi$ corresponding to $e_0$ is the semigroup morphism $\overline{r_{e_0}}\colon S/{\sim}\to S$ induced by the right multiplication $r_{e_0}\colon S\to S$ by $e_0$.\end{proposition}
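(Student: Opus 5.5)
We establish the correspondence in both directions and check that the two assignments are mutually inverse. Write $\pi\colon S\to S/{\sim}$ for the canonical projection, and recall that the class of $a$ is $a\cdot E$, where $E=\E(S)$.

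\emph{From idempotents to sections.} Fix $e_0\in\E(S)$ and let $r_{e_0}\colon S\to S$ be $s\mapsto s\cdot e_0$, which is the idempotent endomorphism $\sigma_{e_0}$ discussed above. By definition of $\sim$, $a\sim b$ means $a\cdot e_0=b\cdot e_0$. Hence $r_{e_0}$ is constant on classes and induces a map $\overline{r_{e_0}}\colon S/{\sim}\to S$. This map is a semigroup morphism because $r_{e_0}$ is one and $\pi$ is a surjective morphism. Moreover $\pi\circ\overline{r_{e_0}}=\id$, since $a\cdot e_0\sim a$. Indeed, $(a\cdot e_0)\cdot e_0=a\cdot e_0$ by idempotency.

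\emph{From sections to idempotents.} Let $s\colon S/{\sim}\to S$ be a semigroup morphism with $\pi s=\id$. The quotient $S/{\sim}$ is a group, isomorphic to $S\cdot e_0$, so it has an identity $1$. Put $e:=s(1)$. Then $e\cdot e=s(1\cdot 1)=e$, so $e\in\E(S)$.

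\emph{The composites are identities.} Starting from $e_0$, the associated idempotent is $\overline{r_{e_0}}(1)$. The class $1$ is the class of $e_0$, because $\pi(e_0)$ is an idempotent of a group. So $\overline{r_{e_0}}(1)=e_0\cdot e_0=e_0$.

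Conversely, starting from $s$ and $e=s(1)$, we must show $s=\overline{r_e}$, i.e. $s(\pi(a))=a\cdot e$ for all $a$. This is the main obstacle. First, $s(\pi(a))=s(\pi(a)\cdot 1)=s(\pi(a))\cdot e$, so $s(\pi(a))$ lies in $S\cdot e$. Second, $\pi$ restricted to $S\cdot e$ is injective: if $x\cdot e=y\cdot e$ is compared after applying $\pi$, we use that $u\sim v$ and $u,v\in S\cdot e$ give $u=u\cdot e=v\cdot e=v$, by the independence of $\sim$ from the chosen idempotent. Third, both $s(\pi(a))$ and $a\cdot e$ lie in $S\cdot e$ and have the same image $\pi(a)$ under $\pi$. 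Hence they are equal.

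Therefore the two assignments are mutually inverse bijections between the sections of $\pi$ and $\E(S)$, and the section attached to $e_0$ is $\overline{r_{e_0}}$, as claimed.
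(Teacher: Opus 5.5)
Your proof is correct and complete. The paper gives no argument for this statement; it imports it from \cite[Proposition~4.3]{FacFin}, so there is no in-text proof to compare against.

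Your argument relies only on facts the paper records just before the statement:
\begin{itemize}
\item $\sim$ can be tested with any idempotent;
\item $r_{e_0}=\sigma_{e_0}$ is an endomorphism, which rests on $e_0$ being a left identity (Lemma~\ref{1.2}), so that $(s e_0)(t e_0)=s t e_0$;
\item $S/{\sim}$ is a group;
\item $S\cdot e$ is a complete irredundant set of representatives of the $\sim$-classes. This is exactly your key second step in the ``conversely'' part, and it is what forces any section $s$ with $s(1)=e$ to coincide with $\overline{r_e}$.
\end{itemize}

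The only blemish is the garbled clause ``if $x\cdot e=y\cdot e$ is compared after applying $\pi$''. Replace it with: if $u,v\in S\cdot e$ and $\pi(u)=\pi(v)$, then $u=u\cdot e=v\cdot e=v$.
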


The kernel of the idempotent endomorphism $\tau$ of the semigroup $S$ is a congruence $\equiv$ on $S$ and its image is $E$. We have that $\tau(x)=x\backslash x$ is the unique element $f\in E$ such that $xf=x$. Every right group $S$ has a partition $\{\,S\cdot e\mid e\in E\,\}$; this easily follows from the fact that, for each $e\in E$, $S\cdot e$ is a group with identity element $e$.  

\section{First results on disemigroups and digroups}

\begin{theorem}\label{2} Let $(D, \vdash,\dashv)$ be a disemigroup and $\U(D)$ the set of its bar-units. Let $\sim$ be the congruence on $(D, \vdash,\dashv)$ generated by the subset $$\{\,(x\vdash y,\ x\dashv y)\mid x,y\in D\,\}$$ of $D\times D$. Then:

{\rm (1)} $e\sim f$ for all $e,f\in \U(D)$. 

{\rm (2)} The functor $F\colon {\SGrp}\to\DSGp$ has a left-inverse left-adjoint $$H\colon \DSGp\to {\SGrp}$$ that associates to every disemigroup $D$ the semigroup $D/{\sim}$.\end{theorem}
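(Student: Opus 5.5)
For part (1), I take two bar-units $e,f\in\U(D)$ and look for a short chain of generating pairs joining them. Since $e$ is a bar-unit, $e\vdash f=f$. Since $f$ is a bar-unit, $e\dashv f=e$. The pair $(e\vdash f,\ e\dashv f)$ is one of the generators of $\sim$, so $f\sim e$. Note that only the defining identities $e\vdash x=x$ and $x\dashv f=x$ are used, so this step is immediate.

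For part (2), I first check that $D/{\sim}$ is really a semigroup, i.e.\ that on the quotient the two induced operations coincide. Because $\sim$ is a congruence for both $\vdash$ and $\dashv$, both operations descend to $D/{\sim}$, and each is still associative. Since $x\vdash y\sim x\dashv y$, we get $[x]\vdash[y]=[x]\dashv[y]$. So $(D/{\sim},\vdash,\dashv)$ lies in the full subcategory of Lemma~\ref{2.4}, and we may set $H(D):=(D/{\sim},\cdot)$ with $[x]\cdot[y]=[x\vdash y]$.

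On morphisms, a disemigroup morphism $\varphi\colon D\to D'$ sends generating pairs $(x\vdash y,x\dashv y)$ to generating pairs of $D'$. Hence $\{(a,b)\mid \varphi(a)\sim'\varphi(b)\}$ is a congruence containing the generators, so it contains $\sim$. Therefore $\varphi$ induces $H(\varphi)\colon D/{\sim}\to D'/{\sim'}$, and functoriality is clear from uniqueness of induced maps.

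\textbf{Left inverse.} For a semigroup $S$, the disemigroup $F(S)=(S,\cdot,\cdot)$ has generating pairs $(xy,xy)$, which are diagonal. So $\sim$ is equality, and $HF(S)=S$ on the nose; likewise on morphisms. Thus $HF=\id_{\SGrp}$.

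\textbf{Adjunction.} The unit is the canonical projection $\eta_D\colon D\to FH(D)$, $x\mapsto[x]$, which is a disemigroup morphism. The point to verify is the universal property. Let $S$ be a semigroup and $\psi\colon D\to F(S)$ a disemigroup morphism. Then $\psi(x\vdash y)=\psi(x)\psi(y)=\psi(x\dashv y)$, so the kernel congruence of $\psi$ contains the generators and hence contains $\sim$. So $\psi$ factors uniquely through $\eta_D$ as a semigroup morphism $D/{\sim}\to S$; uniqueness follows from the surjectivity of $\eta_D$. This yields a natural bijection
\[
\Hom_{\SGrp}(H(D),S)\cong\Hom_{\DSGp}(D,F(S)).
\]
Naturality is routine. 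The only mild obstacle is keeping track that the kernel of a disemigroup morphism is a congruence for both operations, which is what allows the minimality of the generated congruence to be invoked.
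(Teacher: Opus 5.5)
Your proposal is correct and follows the paper's route. For part (1) you use the single generating pair $(e\vdash f, e\dashv f)=(f,e)$, while the paper uses $(f\vdash e,f\dashv e)=(e,f)$. For part (2) you use, as the paper does, that $\sim$ is equality on $F(S)$ and that the kernel of any $\psi\colon D\to F(S)$ contains the generators; you additionally spell out how $H$ acts on morphisms. One small quibble: $HF(S)=S/{=}$ is a set of singletons, so $HF\cong\id_{\SGrp}$ is a canonical natural isomorphism rather than an equality on the nose, which is how the paper phrases it.
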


\begin{proof} (1) Let $e$ and $f$ be two elements of $\U(D)$. Since $e\in \U(D)$, we have that $e\vdash f=f\dashv e=f$. Similarly, since $f\in \U(D)$, we know that $f\vdash e=e\dashv f=e$. Now in the congruence $\sim$ we have the pair $(f\vdash e, f\dashv e)=(e,f)$. Thus we get $e\sim f$, as desired.

(2) In order to prove that $H$ is a left-inverse of $F$, that is, that $H\circ F$ is naturally isomorphic to the identity functor $1_{\SGrp}$, notice that for every semigroup $(S,\cdot)$ we have that $(S,\cdot,\cdot)/{\sim}$ is canonically isomorphic to $(S,\cdot,\cdot)$ because the congruence $\sim$ is the equality $=$ on $S$.

Finally, in order to show that $H$ is a left-adjoint of $F$, fix any disemigroup $D$ and any disemigroup morphism $\varphi\colon D\to (S,\cdot,\cdot)$, where $(S,\cdot)$ is any semigroup. Now $\sim$ is the congruence on $(D, \vdash,\dashv)$ generated by $$\{\,(x\vdash y,\ x\dashv y)\mid x,y\in D\,\}.$$ For any pair $(x\vdash y, x\dashv y)$, with $x,y\in D$, we have that $\varphi(x\vdash y)=\varphi(x)\cdot\varphi(y)=\varphi(x\dashv y)$, hence the pair $(x\vdash y, x\dashv y)$ belongs to the kernel of the morphism $\varphi$. Therefore there is a unique disemigroup morphism $\widetilde{\varphi}\colon D/{\sim}\to (S,\cdot,\cdot)$ such that $\widetilde{\varphi}\circ\pi=\varphi$, where $\pi\colon D\to D/{\sim}$ is the canonical projection. This proves that $H$ is a left-adjoint for $F$.
\end{proof}

\begin{definition} {\rm A {\em digroup}  is a disemigroup  $(D,\vdash,\dashv)$ for which $\I(D,\vdash,\dashv)$ is non-empty. }\end{definition}

We will seen in Section~ref{7} that for a digroup $D$ the set $\I(D)$ is much more important than the halo $\U(D)$.

	 \begin{proposition}\label{3} {\rm (a)} If $D$ is a disemigroup and $\U(D)\ne\emptyset$, then the semigroup $D/\!\sim$ is a monoid.

{\rm (b)} If $D$ is a digroup, then the semigroup $D/\!\sim$ is a group. \end{proposition}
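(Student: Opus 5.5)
The plan is to work entirely in the quotient semigroup $D/{\sim}$ from Theorem~\ref{2}, writing $\pi\colon D\to D/{\sim}$ for the canonical projection and $[x]=\pi(x)$. The key observation is that on $D/{\sim}$ the two operations induced by $\vdash$ and $\dashv$ coincide: the generating pairs $(x\vdash y,\,x\dashv y)$ lie in $\sim$. So there is a single associative product with
$$[x][y]=[x\vdash y]=[x\dashv y]\quad\text{for all } x,y\in D.$$
This lets me transport identities of $D$ that mix $\vdash$ and $\dashv$ into ordinary semigroup identities in $D/{\sim}$.

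For (a), pick $e\in \U(D)$. For every $x\in D$ the bar-unit conditions $e\vdash x=x$ and $x\dashv e=x$ give
$$[e][x]=[e\vdash x]=[x],\qquad [x][e]=[x\dashv e]=[x].$$
Since $\pi$ is surjective, $[e]$ is a two-sided identity of $D/{\sim}$, so $D/{\sim}$ is a monoid. By Theorem~\ref{2}(1) this identity does not depend on the choice of $e\in\U(D)$, although uniqueness of identities in a monoid already guarantees that.

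For (b), let $e\in\I(D)$. Since $\I(D)\subseteq\U(D)$, part (a) shows that $D/{\sim}$ is a monoid with identity $[e]$. For $x\in D$, take a simultaneous inverse $x^{\dagger_e}$ with respect to $e$, so $x\vdash x^{\dagger_e}=x^{\dagger_e}\dashv x=e$. Passing to the quotient gives
$$[x][x^{\dagger_e}]=[x\vdash x^{\dagger_e}]=[e],\qquad [x^{\dagger_e}][x]=[x^{\dagger_e}\dashv x]=[e].$$
Hence every element $[x]$ has a two-sided inverse, and $D/{\sim}$ is a group.

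No step is really an obstacle. The only point needing care is the initial claim that the induced operations on $D/{\sim}$ agree. This is exactly the defining property of the congruence $\sim$ and of $H(D)=D/{\sim}$ being a semigroup in Theorem~\ref{2}(2). Once it is in place, both parts reduce to one-line computations.
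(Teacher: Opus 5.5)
Your proof is correct and is essentially the paper's proof. Part (a) is identical. In part (b) you spell out, through the images $[x^{\dagger_e}]$ of the simultaneous inverses, what the paper only asserts briefly: that $D/{\sim}$ is again a digroup whose two operations coincide, and is therefore a group.
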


\begin{proof} (a) If $\U(D)\ne\emptyset$, fix an element $e\in \U(D)$. By Theorem~\ref{2}(1), the congruence class $[e]_\sim$ contains $\U(D)$ and is a two-sided identity of the semigroup $D/\!\sim$, because the two operations coincide on $D/\!\sim$.

(b) If $D$ is a digroup, its homomorphic image $D/\!\sim$ is also a digroup. Moreover, the two operations coincide on $D/\!\sim$, hence $D/\!\sim$ is a group.
\end{proof}

Similarly to Lemma \ref{2.4}, there is a full and faithful functor $F\colon \Grp\to \Dg$ of the category ${\Grp}$ of groups into the category $\Dg$ of digroups that associates to every group $(G,\cdot)$ the digroup $(G,\cdot,\cdot)$. Hence there is a canonical category isomorphism of the category ${\Grp}$ of groups into the full subcategory of $\Dg$ whose objects are the digroups $(D, \vdash,\dashv)$ for which the two operations $\vdash$ and $\dashv$ coincide, and therefore we will call {\em groups}  the digroups $(D, \vdash,\dashv)$ for which the two operations $\vdash$ and $\dashv$ coincide (or, equivalently, $\U(D, \vdash,\dashv)$ has exactly one element). For a group $(G,\cdot)$ one has that $\U(G,\cdot,\cdot)=\I(G,\cdot,\cdot)=\{1_G\}$.

\begin{remark}  {\rm In general, for a disemigroup $D$, one has that $\sim$ can contain $\{\,(x\vdash y, x\dashv y)\mid x,y\in D\,\}$ properly. For example, let $D$ be any set with at least two elements. Fix an element $g_0\in D$. Let $\cdot$ be the operation on $D$ defined by $x\cdot y=g_0$ for all $x,y\in D$. Then $\cdot$ is associative, $(D,\cdot)$ is a semigroup, and $(D,\cdot,\cdot)$ is a disemigroup. In this case the set $\{\,(x\vdash y, x\dashv y)\mid x,y\in D\,\}$ is the singleton $\{(g_0,g_0)\}$. This singleton generates the congruence $=$, which is equipotent to $D$. Therefore the equality congruence $\sim$ properly contains the set $\{\,(x\vdash y, x\dashv y)\mid x,y\in D\,\}$. }\end{remark}

It is easy to see that, for any non-empty set $X$, the triplet $(X,\pi_2,\pi_1)$ is a digroup. All its elements are bar-units. 
If $e,x\in X$, then $x^{\dagger_e}=e$ is a simultaneous inverse of $x$ with respect to $e$. Therefore $\U(X)=\I(X)=X$ in this digroup. 
Moreover $\sim$ is the trivial congruence, hence $X/\!\sim$ is the group with one element. There is functor $G\colon {\Set_{\ne\emptyset}}\to\Dg$ of the category $\Set_{\ne\emptyset}$ of non-empty sets into the category $\DSGp$ of digroups that associates to every non-empty set $X$ the digroup $(X,\pi_2,\pi_1)$. The functor $G$ is the identity on morphisms, and is full and faithful, so that it induces a canonical category isomorphism of the category $\Set_{\ne\emptyset}$ of non-empty sets into the full subcategory of $\Dg$ whose objects are the digroups $(D, \vdash,\dashv)$ for which $\vdash$ coincides with $\pi_2$ and $\dashv$ coincides with $\pi_1$.

\begin{lemma} Let $(S,\cdot)$ be a right group. Then there exists a binary operation $\dashv$ on the set $S$ such that $(S,\cdot,\dashv)$ is a digroup.\end{lemma}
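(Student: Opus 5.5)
Identify $(S,\cdot)$ with a direct product and write the second operation down explicitly. By Theorem~\ref{1.1}(e) we may assume $S=G\times E$, where $G$ is a group and $(E,\pi_2)$ is a non-empty right zero semigroup. The operation is then $(g,e)\cdot(h,f)=(gh,f)$. Set $\vdash:=\cdot$. The natural candidate for $\dashv$ is the mirror image, which keeps the first $E$-coordinate instead of the last:
$$(g,e)\dashv(h,f):=(gh,e).$$
Intrinsically this is $x\dashv y=(x\cdot y)\cdot\tau(x)$, with $\tau(x)=x\backslash x$ the idempotent of the preliminaries. So the construction does not depend on the chosen isomorphism, although we do not need that.

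The verification then consists of coordinatewise checks, with $x=(g,e)$, $y=(h,f)$, $z=(k,d)$.

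First, $\dashv$ is associative: both bracketings give $(ghk,e)$. So $(S,\dashv)$ is a semigroup, in fact a left group.

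Next, the three axioms of Definition~\ref{disemigroup}:
\begin{itemize}
\item[(b)] $x\vdash(y\dashv z)=(ghk,f)=(x\vdash y)\dashv z$;
\item[(c)] $x\dashv(y\vdash z)=(ghk,e)=x\dashv(y\dashv z)$;
\item[(d)] $(x\dashv y)\vdash z=(ghk,d)=(x\vdash y)\vdash z$.
\end{itemize}
In every case the $G$-coordinate is the product $ghk$ by associativity in $G$. The $E$-coordinate is decided by whether the outermost operation is $\vdash$ (keep the right one) or $\dashv$ (keep the left one), and the two sides agree.

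Finally, we exhibit a point of $\I(S)$. Fix any $e\in E$ and take $u=(1_G,e)$.
\begin{itemize}
\item $u$ is a bar-unit: $u\vdash(h,f)=(h,f)$ and $(h,f)\dashv u=(h,f)$.
\item For $x=(g,f)$, set $x^{\dagger_u}:=(g^{-1},e)$. Then $x\vdash x^{\dagger_u}=(1_G,e)=u$ and $x^{\dagger_u}\dashv x=(1_G,e)=u$.
\end{itemize}
Hence $u\in\I(S,\vdash,\dashv)$, so $\I(S)\neq\emptyset$ and $(S,\cdot,\dashv)$ is a digroup.

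The main obstacle is only the initial choice of $\dashv$. The right-zero factor forces $\dashv$ to remember the left $E$-coordinate, so that the idempotent $(1_G,e)$ can serve as a bar-unit from both sides. Once the product decomposition is invoked, every remaining step is a one-line coordinate computation.
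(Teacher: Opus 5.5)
Your proof is correct and is the paper's argument: the paper also writes $S=G\times E$ via Theorem~\ref{1.1}(e), defines $(g,e)\dashv(h,f)=(gh,e)$, leaves to the reader the coordinate checks that you carry out, and notes that in fact $\U(S)=\I(S)=\{1_G\}\times E$. Your intrinsic description $x\dashv y=(x\cdot y)\cdot\tau(x)$ is a correct extra not in the paper, and it shows the construction does not depend on the chosen decomposition.
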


\begin{proof} Since $(S,\cdot)$ is a right group, $(S,\cdot)$ is isomorphic to the external direct product of a group $G$ and a non-empty right zero semigroup $E$ . For simplicity of notation, we will suppose $S=G\times E$. Define the binary operation $\dashv$ on $S$ setting $(g,e)\dashv(h,f)=(g\cdot h, e)$ for every $(g,e),(h,f)\in G\times E$. We leave to the reader the simple verification that $(S,\cdot,\dashv)$ is a digroup. One has $\U(S,\cdot,\dashv)=\I(S,\cdot,\dashv)=\{1_G\}\times E$.\end{proof}

	 \begin{proposition}\label{3.9} Let $(D,\vdash,\dashv)$ be a digroup. Then:
	 
	\noindent{\rm (a)} The semigroup $(D,\vdash)$ is a right group.
	 
\noindent{\rm	 (b)} The semigroup $(D,\dashv)$ is a left group.
	 
	\noindent{\rm (c) }The following four conditions are equivalent for an element $x\in D$:
	 
	{\rm (1) }$x\vdash x=x$;
	 
{\rm (2)}  $x\dashv x=x$;

{\rm	(3)} $x\vdash y=y$ for every $y\in D$;
	
{\rm (4)}  $y\dashv x=y$ for every $y\in D$.
	
\noindent{\rm 	(d)} An element $e\in \U(D)$ belongs to $\I(D)$ if and only if $x\vdash e=e\dashv x$ for every $x\in D$.

\noindent{\rm 	(e)} $\emptyset\ne\I(D,\vdash,\dashv)\subseteq\U(D,\vdash,\dashv)=\E(D,\vdash)=\E(D,\dashv)$.\end{proposition}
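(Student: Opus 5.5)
Fix $e\in\I(D)$, which exists because $D$ is a digroup. For every $x$ there is $x^{\dagger_e}$ with $x\vdash x^{\dagger_e}=x^{\dagger_e}\dashv x=e$. For (a) I would apply Theorem~\ref{1.1}(f) to $(D,\vdash)$. Since $e$ is a bar-unit, $e\vdash x=x$, so $e$ is a left identity, and $x^{\dagger_e}$ is a right inverse of $x$ with respect to $e$. Hence $(D,\vdash)$ is a right group. Part (b) follows by the symmetry of Remark~\ref{simmetria}: the disemigroup $(D,\dashv^{\op},\vdash^{\op})$ has the same bar-units and the same simultaneous inverses, so it is again a digroup. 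Applying (a) to it, $(D,\dashv^{\op})$ is a right group, i.e.\ $(D,\dashv)$ is a left group.

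For (c), (1)$\Rightarrow$(3) is Lemma~\ref{1.2}, since a right group is left cancellative, and (3)$\Rightarrow$(1) is trivial. Dually, (2)$\Leftrightarrow$(4) holds in the left group $(D,\dashv)$. The main step is linking the two operations, and I would use the mixed axioms.

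For (1)$\Rightarrow$(2): if $x\vdash y=y$ for all $y$, write $x=x\vdash e'$ for some $\vdash$-idempotent $e'$ (right group structure, $\tau(x)$). A cleaner route is to compute $x\dashv x$ directly. By (4) for $e$, $x\dashv x=x\dashv(x\dashv e)$. Axiom (c) gives $x\dashv(x\vdash e)=x\dashv(x\dashv e)$, and $x\vdash e=e$ by (3), so $x\dashv x=x\dashv e=x$.

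For (2)$\Rightarrow$(1), use the dual identity via (d) of Definition~\ref{disemigroup}: $x\vdash x=(e\vdash x)\vdash x=(e\dashv x)\vdash x$. Since $e\dashv x=e$ by (4) for $x$, this equals $e\vdash x=x$. I expect the bookkeeping here, choosing which mixed axiom to use, to be the only delicate point. This gives $\U(D)=\E(D,\vdash)=\E(D,\dashv)$: an element satisfying (1) satisfies (3) and (4), hence is a bar-unit. Together with the inclusion already noted, this settles (e).

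For (d), suppose first $e\in\I(D)$. The ``only if'' direction is computational, again using the mixed axioms and the inverse $x^{\dagger_e}$. By the right group structure, $x\vdash e$ is the unique $z$ in the group $D\vdash e$ with $z\sim x$, and similarly for $e\dashv x$. I would try to verify $x\vdash e=e\dashv x$ by showing both are inverse to $x^{\dagger_e}$ in a suitable sense. Compute $(x\vdash e)\dashv x^{\dagger_e}$: by axiom (b) it equals $x\vdash(e\dashv x^{\dagger_e})=x\vdash x^{\dagger_e}=e$. Similarly, $x^{\dagger_e}\vdash(e\dashv x)=(x^{\dagger_e}\vdash e)\dashv x$. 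Then I would manipulate using the group $e\dashv D\vdash e$ to get equality; this is the step I expect to be hardest.

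Conversely, let $e\in\U(D)$ satisfy $x\vdash e=e\dashv x$ for all $x$. Take the right inverse $y$ of $x$ in the right group $(D,\vdash)$ with $x\vdash y=e$ (Theorem~\ref{1.1}(g)), and replace it by $y\vdash e$ to land in $D\vdash e=e\dashv D$. Then check $y\dashv x=e$ using centrality and axiom (b).
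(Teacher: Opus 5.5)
\textbf{The gap.} The ``only if'' half of (d) is not proved. You never show that $x\vdash e=e\dashv x$ for $e\in\I(D)$; the argument stops at ``I would manipulate using the group $e\dashv D\vdash e$''.

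The one computation you do complete, $(x\vdash e)\dashv x^{\dagger_e}=x\vdash(e\dashv x^{\dagger_e})=x\vdash x^{\dagger_e}$, also silently uses $e\dashv x^{\dagger_e}=x^{\dagger_e}$. That is not automatic, because $e$ is only a right identity for $\dashv$. It is true, by right cancellation in the left group $(D,\dashv)$, since $(e\dashv x^{\dagger_e})\dashv x=e\dashv e=e=x^{\dagger_e}\dashv x$.

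The missing idea is to insert the simultaneous inverse and use axiom (b) of Definition~\ref{disemigroup} once:
$$x\vdash e=x\vdash(x^{\dagger_e}\dashv x)=(x\vdash x^{\dagger_e})\dashv x=e\dashv x.$$
This is the paper's argument. Your own route can also be closed. By axiom (c), $(e\dashv x)\dashv x^{\dagger_e}=e\dashv(x\vdash x^{\dagger_e})=e\dashv e=e$. Together with your computation this gives $(e\dashv x)\dashv x^{\dagger_e}=(x\vdash e)\dashv x^{\dagger_e}$, and right cancellation yields $x\vdash e=e\dashv x$.

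\textbf{The rest.} Everything else is sound, and in places it differs from the paper.

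In (c) you connect the two operations directly through axioms (c) and (d) of Definition~\ref{disemigroup}, proving (3)$\Rightarrow$(2) and (4)$\Rightarrow$(1). The paper instead cites \cite[Lemma~4.4]{Kinyon} for (3)$\Rightarrow$(4). Your version makes the proposition self-contained.

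For the converse of (d) you argue directly. The paper argues by contrapositive, comparing $x^{\dagger_f}\vdash e$ and $e\dashv x^{\dagger_f}$ for some $f\in\I(D)$. Your direct argument works, but you should record the step your sketch leaves implicit:
\begin{itemize}
\item From $x\vdash y=e$ one gets $x\vdash(y\vdash x\vdash e)=e\vdash x\vdash e=x\vdash e$.
\item Left cancellation then gives $y\vdash x\vdash e=e$.
\item With $y=y\vdash e$, axiom (b) and centrality give $y\dashv x=y\vdash(e\dashv x)=y\vdash x\vdash e=e$.
\end{itemize}
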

	
	\begin{proof} If $(D,\vdash,\dashv)$ is a digroup, the set $\I(D,\vdash,\dashv)$ is non-empty, so that we can fix an element $e$ in $\I(D,\vdash,\dashv)$. Then, for all $x\in D$, we have that $e\vdash x=x$, $x\dashv e=x$, and there exists $x^{\dagger_e}\in D$ such that $x\vdash x^{\dagger_e}=e$ and $x^{\dagger_e}\dashv x=e$.
	
	(a) Fix $a,b\in D$. Then $a\vdash(a^{\dagger_e}\vdash b)=e\vdash b=b$. This proves that the semigroup $(D,\vdash)$ is right simple. But $\emptyset\ne\I(D,\vdash,\dashv)\subseteq\E(D,\vdash)$, so that $(D,\vdash)$ is a right group by Theorem~\ref{1.1}(d).
	
	(b)	This is the right/left symmetric of (a) (Remark~\ref{simmetria}).
	
	(c) The implication (3)${}\Rightarrow{}$(1) is trivial. The implication (1)${}\Rightarrow{}$(3) holds by (a) and Lemma~\ref{1.2}(a). Therefore we get that (1)${}\Leftrightarrow{}$(3) holds. The equivalence (1)${}\Leftrightarrow{}$(3) is its right/left symmetric (Remark~\ref{simmetria}).
	
	The proof of (3)${}\Rightarrow{}$(4) is given in \cite[Lemma~4.4]{Kinyon}, and (4)${}\Rightarrow{}$(3) is  its right/left symmetric.
	
	(d) If $e\in\I(D)$, then every element of $D$ has a simultaneous inverse with respect to $e$. Thus, for any $x\in D$, there is an element $x^{\dagger_e}\in D$ such that $x\vdash x^{\dagger_e}=x^{\dagger_e} \dashv x=e$. It follows that
    $$
    x\vdash e=x\vdash (x^{\dagger_e} \dashv x)=(x\vdash x^{\dagger_e})\dashv x=e\dashv x.
    $$

	
	For the converse, fix an element $f\in\I(D)$, which exists because $D$ is a digroup. For an element $e\in \U(D)\setminus\I(D)$, there exists an element $x\in D$ for which the two inverses $x_r$ and $x_l$ of $x$ relative to $e$ are different, that is, there exist elements 
	$x,x^{\dagger_f}, x_r,x_l\in D$ such that $x\vdash x^{\dagger_f}=x^{\dagger_f} \dashv x=f$, $x\vdash x_r=e$, $x_l\dashv x=e$ and $x_r\ne x_l$. Let us show that $x_r=x^{\dagger_f} \vdash e$. By the uniquenes of the right inverse $x_r$ in the right group $(D,\vdash)$, it suffices to show that $x\vdash (x^{\dagger_f} \vdash e)=e$. This is trivially true. By left/right symmetry, $x_l=e\dashv x^{\dagger_f} $. Therefore $x_r\ne x_l$ can be rewritten as $x^{\dagger_f} \vdash e\ne e\dashv x^{\dagger_f} $. Hence the condition $y\vdash e=e\dashv y$ for every $y\in D$ does not hold.
	
	(e) follows immediately from (c).
	\end{proof}

By Proposition~\ref{3.9}(c) it would be better to call {\em idempotents} the elements of $\U(D)$ for a digroup $D$, rather than bar-units, and, by (d) it would be better to call {\em central idempotents} the elements of $\I(D)$

    \medskip
    
	From Proposition~\ref{3.9}((a) and (b)), we know that in any digroup $(D,\vdash,\dashv)$, the right inverse relatively to $\vdash$ and the left inverse relatively to $\dashv$, of any element $x\in D$, with respect to any $e\in U(D)$ are both unique. We will show in Example ~\ref{5.2} that there exist digroups $D$ with $\I(D)\ne \U(D)$. By definition of $\I(D)$, for any digroup $(D,\vdash,\dashv)$ and any element $e\in\U(D)\setminus\I(D)$, there exists an element $x\in D$ for which the unique right inverse of $x$ relatively to $\vdash$ is different from the unique left inverse of $x$ relatively to $\dashv$.
	
	\medskip
	
	The disemigroups $D$ with $\U(D)\ne\emptyset$ are called {\em dimonoids} in \cite{Kinyon}. The disemigroups $(D,\vdash,\dashv)$ with $\U(D)\ne\emptyset$,  $(D,\vdash)$  a right group and
$(D,\dashv)$ a left group. are called {\em generalized digroups} in \cite{Salazar}. 
	
	\medskip

J.~D.~Phillips \cite{Phillips} has shown that, for a set $D$ with two binary operations $\vdash$ and $\dashv$, one has that $(D,\vdash,\dashv)$ is a digroup if and only if

(1) $(D,\vdash)$ and $(D,\dashv)$ are semigroups;
	
    (2) $x\vdash (y\dashv z)=(x\vdash y)\dashv z$ for every $x,y,z\in D$; and
	
    (3) $\I(D,\vdash,\dashv)\neq\emptyset$

\medskip

 A {\em pointed digroup} is a pair $(D,e_0)$, where $D$ is a digroup and $e_0\in I(D)$. A {\em pointed right group} is a pair $(S,e_0)$, where $S$ is a right group and $e_0\in \E(S)$. In the category of pointed digroups, morphisms $$f\colon (D,\vdash,\dashv,e_0)\to(D',\vdash,\dashv,e'_0)$$ are the mappings $f\colon D\to D'$ that preserve the binary operations $\vdash$ and $\dashv$ and map the {\em base point} $e_0$ to the base point $e'_0$. Similarly for pointed right groups $(S,e_0)$, where we suppose that the base point $e_0$ belongs to $\E(S)$.

\section{Idempotent digroup endomorphisms}

As we have already mentioned after the statement of Theorem~\ref{1.1}, if $(S,\cdot)$ is a right group and $e\in \E(S,\cdot)$, there is an idempotent endomorphism of $(S,\cdot)$ defined by setting $\sigma_e(s) = s \cdot e$ for every $s \in S$. The image of $\sigma_e$ is the subsemigroup $S \cdot e$ of $S$, and  $S \cdot e$ is a group. Let $(D,\vdash,\dashv)$ be a digroup. Then, for every $e\in \U(D,\vdash,\dashv)=\E(D,\vdash)=\E(D,\dashv)$, there is an idempotent semigroup endomorphism of $(D,\vdash)$ defined by setting $\sigma_e(s) = s \vdash e$ for every $s \in D$ and there is idempotent endomorphism of $(D,\dashv)$ defined by setting $\sigma'_e(s) = e \dashv s$ for every $s \in D$. If $e\in \I(D)$, then $s\vdash e=e\dashv s$ for every $s\in D$, so that $\sigma_e=\sigma'_e$ is an idempotent  digroup morphism. More precisely we have:

\begin{proposition} Let $(D, \vdash,\dashv)$ be a digroup and let $e$ be an element of $\U(D)$. Then:

\noindent{\rm (a)} The mapping $r_{\vdash e}\colon D\to D$, defined by $r_{\vdash e}(x)=x\vdash e$ for every $x\in D$, is a digroup morphism if and only if $e\in \I(D)$.

\noindent{\rm (b)} There is a one-to-one correspondence between the set of the digroup morphisms that are right inverses of the canonical projection $\pi\colon D\to D/{\sim}$ and the set $\I(D)$. If $e\in \I(D)$, the inverse homomorphism of $\pi$ corresponding to $e$ is the semigroup morphism $\overline{r_{\vdash e}}\colon D/{\sim}\to D$ induced by right multiplication $r_{\vdash e}\colon D\to D$ by $e$.\end{proposition}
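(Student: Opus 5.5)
For (a), I will first show that $r_{\vdash e}$ always preserves $\vdash$. For $x,y\in D$, Proposition~\ref{3.9}(c) says $e$ is a left identity for $\vdash$, so
$$(x\vdash e)\vdash(y\vdash e)=x\vdash(e\vdash y)\vdash e=x\vdash y\vdash e.$$

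Next I will check $\dashv$. I need $(x\dashv y)\vdash e=(x\vdash e)\dashv(y\vdash e)$. By axiom (d), the left side equals $x\vdash y\vdash e$. By axiom (b), the right side equals $x\vdash(e\dashv(y\vdash e))$. By axiom (c), $e\dashv(y\vdash e)=e\dashv(y\dashv e)=e\dashv y$, using that $e$ is a right identity for $\dashv$. So the right side is $x\vdash(e\dashv y)$.

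If $e\in\I(D)$, then Proposition~\ref{3.9}(d) gives $e\dashv y=y\vdash e$, and the two sides agree. Conversely, suppose $r_{\vdash e}$ preserves $\dashv$. Taking $x=e$ gives $e\vdash(e\dashv y)=e\vdash y\vdash e$. Since $e$ is a left identity for $\vdash$, this reads $e\dashv y=y\vdash e$ for every $y$, so $e\in\I(D)$ by Proposition~\ref{3.9}(d). This part is routine.

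For (b), I will first define $s_e=\overline{r_{\vdash e}}$ for $e\in\I(D)$. It is well defined because $r_{\vdash e}$ is a digroup morphism by (a) and its kernel contains the generating pairs: by axiom (d), $(x\vdash y)\vdash e=(x\dashv y)\vdash e$. Hence $\sim\subseteq\ker r_{\vdash e}$. It is a right inverse of $\pi$ because $x\vdash e\sim x$. To see this, note $x\vdash e\sim x\dashv e=x$, since $(x,e)$ gives a generating pair. Distinct $e\ne f$ give distinct sections: $s_e([e])=e\vdash e=e$, while $s_f([e])=e\vdash f=f$.

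For surjectivity, let $s$ be any digroup morphism with $\pi s=\id$. I set $e:=s(1)$, where $1=[e_0]$ is the identity of the group $D/{\sim}$ (Proposition~\ref{3}(b)). Then $e\vdash e=s(1\cdot1)=e$, so $e\in\U(D)$. For every class $\xi$ I get $e\dashv s(\xi)=s(\xi)=s(\xi)\vdash e$. It remains to show $s(\xi)=x\vdash e$ for any $x\in\xi$.

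Here I will work inside the group $D\vdash e$, whose identity is $e$. Both $s(\xi)=s(\xi)\vdash e$ and $x\vdash e$ lie in it, and both are $\sim x$. So it suffices to show that $\sim$ is injective on $D\vdash e$. I will use the right-group description of the previous section: $a\sim b$ should be equivalent to $a\vdash e=b\vdash e$. This requires identifying the digroup congruence $\sim$ with the right-group congruence of $(D,\vdash)$, which is the main obstacle. One direction is the inclusion $\sim\subseteq\ker r_{\vdash e_0}$ already shown for $e_0\in\I(D)$. For the other, I note that $r_{\vdash e_0}(x)=x\vdash e_0\sim x$. Since all idempotents define the same relation, this gives the equivalence.

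Finally I must show $e\in\I(D)$. This follows because $s=r_{\vdash e}\circ\pi$ is a digroup morphism, hence so is $r_{\vdash e}$, and part (a) applies. To check that $r_{\vdash e}$ preserves $\dashv$, I would take $x=e$ in the computation of (a). This gives $e\dashv y=e\vdash(e\dashv y)$, which is the image of $e\dashv y$, namely $s([e\dashv y])=s([y])=y\vdash e$. So $e\dashv y=y\vdash e$ for all $y$, and $e\in\I(D)$ by Proposition~\ref{3.9}(d).
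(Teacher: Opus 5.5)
Your proof is correct. Part (a) is essentially the paper's computation. The paper cites Proposition~\ref{inv} for preservation of $\vdash$ and rewrites $(x\vdash e)\dashv(y\vdash e)$ using $t\vdash e=e\dashv t$. You instead check $\vdash$ directly and use axioms (b), (c), (d) to reduce preservation of $\dashv$ to $x\vdash(y\vdash e)=x\vdash(e\dashv y)$. The converse, obtained by setting $x=e$, is the same in both.

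The real difference is in (b). The paper disposes of it in one line by appealing to the right-group result, Proposition~\ref{inv}. You instead prove the correspondence directly inside the digroup. In doing so you make explicit two points that the paper's citation leaves implicit:
\begin{itemize}
\item the digroup congruence $\sim$, generated by the pairs $(x\vdash y,x\dashv y)$, coincides with the right-group congruence $a\vdash e=b\vdash e$ of $(D,\vdash)$; the paper only proves this in the proposition that follows;
\item a semigroup section $\overline{r_{\vdash e}}$ is a digroup morphism exactly when $r_{\vdash e}=\overline{r_{\vdash e}}\circ\pi$ is one, so that (a) singles out $\I(D)$ inside $\E(D,\vdash)$.
\end{itemize}
The paper's version is shorter but relies on an external result and on a later proposition. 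Yours is self-contained.

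There is one slip. In your last paragraph, ``$s=r_{\vdash e}\circ\pi$'' should read ``$r_{\vdash e}=s\circ\pi$'', which is what you actually established. Once that is stated, part (a) immediately gives $e\in\I(D)$, so your final explicit check is redundant. As written, that check (``which is the image of $e\dashv y$'') is also hard to parse. If you keep it, the cleanest version is
\[
e\dashv y=e\dashv(y\vdash e)=s(1)\dashv s([y])=s([y])=y\vdash e,
\]
using axiom (c) and the fact that $s$ preserves $\dashv$.
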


\begin{proof} 
(a) Let $e$ be an element of $\U(D)$. Since $(D, \vdash)$ is a right group,  we know that the mapping $r_{\vdash e}\colon D\to D$, defined by $r_{\vdash e}(x)=x\vdash e$ for every $x\in D$, is a  right group morphism (Proposition~\ref{inv} or \cite[Proposition 4.3]{FacFin}). 
Therefore $r_{\vdash e}$ is a digroup morphism if and only if a left group endomorphism of the left group $(D,\dashv)$, that is, if and only if $(x\vdash e)\dashv(y\vdash e)=(x\dashv y)\vdash e$ for every $x,y\in D$.     

Now if $e\in \I(D)$, then $t\vdash e=e\dashv t$ for every $t\in G$ (Proposition~\ref{3.9}(d)). Therefore $(x\vdash e)\dashv(y\vdash e)=(e\dashv x)\dashv(e\dashv y)=e\dashv x\dashv y=(x\dashv y)\vdash e$, as desired.

Conversely, if $(x\vdash e)\dashv(y\vdash e)=(x\dashv y)\vdash e$ for every $x,y\in D$ and we replace $x$ with $e$, we get that, for every $y\in D$, $(e\vdash e)\dashv(y\vdash e)=(e\dashv y)\vdash e$, so $e\dashv(y\vdash e)=(e\vdash y )\vdash e$, i.e., $e\dashv(y\dashv e)=y\vdash e$. That is,  
$e\dashv y=y\vdash e$, as we wanted to prove.

(b) now follows from Proposition~\ref{inv}.\end{proof}

In the notation of the previous proposition, we have that the image of $\sigma_e$ is $S \vdash e= e \dashv S$ for every $e\in \I(D)$. As far as its kernel is concerned we have (cf.~Theorem \ref{2}):

\begin{proposition} Let $(D, \vdash,\dashv)$ be a digroup. For every $e\in \I(D)$, the kernel of the idempotent digroup endomorphism $\sigma_e$, that is, the congruence $\sim$ on $D$ defined, for every $x,y\in D$, by $x\sim y$ if $x \vdash e=y\vdash e$ (equivalently, if $e\dashv x=e\dashv y$) is the congruence  on $(D, \vdash,\dashv)$ generated by the subset $$\{\,(x\vdash y,\ x\dashv y)\mid x,y\in D\,\}$$ of $D\times D$. \end{proposition}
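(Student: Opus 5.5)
Let $\equiv_e$ denote the kernel of $\sigma_e$, i.e. $x\equiv_e y$ iff $x\vdash e=y\vdash e$. I will show that $\equiv_e$ coincides with the congruence $\sim$ generated by $R=\{\,(x\vdash y,\ x\dashv y)\mid x,y\in D\,\}$, by proving both inclusions.

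First, the parenthetical equivalence is immediate. Since $e\in\I(D)$, Proposition~\ref{3.9}(d) gives $t\vdash e=e\dashv t$ for all $t$. Hence $x\vdash e=y\vdash e$ if and only if $e\dashv x=e\dashv y$. Moreover, $\sigma_e$ is a digroup endomorphism by the previous proposition, part (a). So its kernel $\equiv_e$ is a congruence on $(D,\vdash,\dashv)$.

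For the inclusion ${\sim}\subseteq{\equiv_e}$, it suffices to show $R\subseteq{\equiv_e}$, because $\equiv_e$ is a congruence and $\sim$ is the smallest congruence containing $R$. So I must check $\sigma_e(x\vdash y)=\sigma_e(x\dashv y)$. Since $\sigma_e$ preserves both operations, this amounts to $\sigma_e(x)\vdash\sigma_e(y)=\sigma_e(x)\dashv\sigma_e(y)$. Hence it is enough to show that $\vdash$ and $\dashv$ agree on the image $D\vdash e=e\dashv D$. For $a=x\vdash e$ and $b=e\dashv y$ I compute
$a\dashv b=(x\vdash e)\dashv(e\dashv y)=x\vdash(e\dashv(e\dashv y))=x\vdash(e\dashv y)$, using axiom (b) and $e\dashv e=e$. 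On the other side,
$a\vdash b=(x\vdash e)\vdash(e\dashv y)=x\vdash(e\vdash(e\dashv y))=x\vdash(e\dashv y)$, since $e$ is a left identity for $\vdash$. The two sides coincide, which gives the inclusion.

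For the reverse inclusion ${\equiv_e}\subseteq{\sim}$, I first show that $x\sim x\vdash e$ for every $x\in D$. The pair $(x\vdash e,\ x\dashv e)=(x\vdash e,\ x)$ lies in $R$, because $e$ is a bar-unit. Now suppose $x\equiv_e y$. Then $x\sim x\vdash e=y\vdash e\sim y$, and transitivity gives $x\sim y$. This completes the argument.

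The only step needing care is verifying that the two operations coincide on the image of $\sigma_e$. I expect this to be the main obstacle, and I handle it through the mixed associativity axiom (b) together with the identities $e\vdash z=z$ and $e\dashv e=e$.
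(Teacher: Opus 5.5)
Your proof is correct, but both inclusions take routes different from the paper's.

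For ${\sim}\subseteq{\equiv_e}$, the paper just uses axiom (d) of a disemigroup, $(x\dashv y)\vdash e=(x\vdash y)\vdash e$. This says at once that every generator $(x\vdash y,\,x\dashv y)$ lies in the kernel of $\sigma_e$, so the step you flag as the ``main obstacle'' is really a one-liner. Your detour is valid: you use that $\sigma_e$ is a digroup morphism, then check via axiom (b) that $\vdash$ and $\dashv$ agree on $D\vdash e=e\dashv D$. It has the by-product that the image of $\sigma_e$ is a group. However, it depends on $e$ being central, both for $\sigma_e$ to be a morphism and for $D\vdash e=e\dashv D$. The axiom-(d) observation needs no such hypothesis.

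For ${\equiv_e}\subseteq{\sim}$, the paper writes $x=(x\vdash e)\vdash(x\backslash_\vdash x)$. It then uses Theorem~\ref{2}(1) to get $x\backslash_\vdash x\sim y\backslash_\vdash y$, since both are bar-units, and concludes by compatibility of $\sim$ with $\vdash$. Your observation that $(x\vdash e,\,x\dashv e)=(x\vdash e,\,x)$ is itself a generator is shorter. It avoids the left division and Theorem~\ref{2}(1) entirely.

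It also gives slightly more. Your argument uses only symmetry and transitivity, so it shows that the plain equivalence relation generated by $\{(x\vdash y,\,x\dashv y)\}$ already contains $\equiv_e$. Hence that equivalence relation equals $\sim$ and is automatically a congruence, which the paper's argument does not show.
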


\begin{proof} Given any $x,y\in D$, one has that $x\vdash y\sim x\dashv y$, because $(x\vdash y)\vdash e=(x\dashv y)\vdash e$.

Conversely, suppose $x,y\in D$ and $x\sim y$, so that $x \vdash e=y\vdash e$. Let $\sim'$ denote the congruence  on $(D, \vdash,\dashv)$ generated by the set $\{\,(x\vdash y,\ x\dashv y)\mid x,y\in D\,\}$, and let $\backslash$ denote the left inverse operation of $\vdash$, so that $x=x \vdash e\vdash (x\backslash x)$, $y=y \vdash e\vdash (y\backslash y)$, and $x\backslash x,y\backslash y \in\E(D, \vdash)$. Then $(x\backslash x)\sim'(y\backslash y)$ by Theorem \ref{2}(1). Since $\sim'$ is a congruence, it follows that  $x=x \vdash e\vdash (x\backslash x)\sim' y \vdash e\vdash (y\backslash y)=y$, as desired.\end{proof} 

On the contrary, the two semigroup endomorphisms $\tau_\vdash$ and $\tau_\dashv$ of $(D, \vdash$ and $(D,\dashv)$ respectively, do not coincide in general. In fact, let $D$ be a digroup and $e$ an element in $U(D)\setminus I(D)$. Then there exists $y\in D$ such that $e\dashv y\ne y\vdash e$. Set $x:=y\vdash e$. Let us show that $\tau_\vdash(x)\ne\tau_\dashv(x)$. We have that $\tau_\vdash(x)=e$, because $x\vdash e=y\vdash e\vdash e=y\vdash e=x$. Assume by contradiction that $\tau_\dashv(x)=e$. Then $e\dashv x=x$, that is, $e\dashv (y\vdash e)=y\vdash e$, i.e., $e\dashv y\dashv e=y\vdash e$. Equivalently, $e\dashv y=y\vdash e$. This is a contradiction.

\bigskip

Digroups are obtained gluing together a left group and a right group. Let us describe how it is possible to construct such a gluing. First of all, notice that in a digroup $(D,\vdash,\dashv)$ there is semigroup homomorphism that describes how far the right group structure $(D,\vdash)$ and the left group structure $(D,\dashv)$ are. For every $x,y\in D$ there exists a unique element $\lambda_x(y)$ such that $x\vdash y=\lambda_x(y)\dashv x$ because $(D, \dashv)$ is a left group. Let us examine the properties of this mapping $\lambda \colon D\to D^D$. 

\begin{proposition} $\lambda$ is a digroup homomorphism of the digroup  $(D,\vdash,\dashv)$ into the group $\Aut_\Dg(D,\vdash,\dashv)$.\end{proposition}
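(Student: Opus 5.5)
The plan is to verify everything directly from the disemigroup identities (b), (c), (d) of Definition~\ref{disemigroup} together with the uniqueness built into the definition of $\lambda_x(y)$. Recall that $\lambda_x(y)$ is the unique element with $x\vdash y=\lambda_x(y)\dashv x$; it exists and is unique because $(D,\dashv)$ is a left group (Proposition~\ref{3.9}(b)). Since $\Aut_\Dg(D,\vdash,\dashv)$ is a group, it is regarded as the digroup $(\Aut_\Dg(D),\circ,\circ)$. So three things must be shown:
\begin{enumerate}
\item each $\lambda_x$ is a bijective digroup endomorphism;
\item $\lambda_{x\vdash y}=\lambda_x\circ\lambda_y$;
\item $\lambda_{x\dashv y}=\lambda_x\circ\lambda_y$.
\end{enumerate}
In each case I will compute the relevant element multiplied on the right by $x$ (respectively by $x\vdash y$ or $x\dashv y$) with $\dashv$, and then invoke uniqueness.

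\emph{Step 1: $\lambda_x$ preserves both operations.} For $\vdash$, I use (b) and then (d):
$$(\lambda_x(y)\vdash\lambda_x(z))\dashv x=\lambda_x(y)\vdash(x\vdash z)=(\lambda_x(y)\dashv x)\vdash z=(x\vdash y)\vdash z.$$
Hence $\lambda_x(y\vdash z)=\lambda_x(y)\vdash\lambda_x(z)$. For $\dashv$, I use (c) and then (b):
$$\lambda_x(y)\dashv\lambda_x(z)\dashv x=\lambda_x(y)\dashv(x\dashv z)=(x\vdash y)\dashv z=x\vdash(y\dashv z).$$

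\emph{Step 2: $\lambda_x$ is bijective.} Fix $e\in\I(D)$ and let $x^{\dagger_e}$ be a simultaneous inverse of $x$. Given $z$, the equation $\lambda_x(y)=z$ means $x\vdash y=z\dashv x$. In the right group $(D,\vdash)$ (Proposition~\ref{3.9}(a), Theorem~\ref{1.1}(c)) this equation has the unique solution $y=x^{\dagger_e}\vdash(z\dashv x)$; indeed $x\vdash(x^{\dagger_e}\vdash(z\dashv x))=e\vdash(z\dashv x)=z\dashv x$. Thus $\lambda_x\in\Aut_\Dg(D)$.

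\emph{Step 3: the homomorphism property.} For any $w$, using (c) and then (b),
$$\lambda_x(w)\dashv(x\vdash y)=\lambda_x(w)\dashv(x\dashv y)=(x\vdash w)\dashv y=x\vdash(w\dashv y).$$
Taking $w=\lambda_y(z)$, this equals $x\vdash(y\vdash z)$, so $\lambda_{x\vdash y}=\lambda_x\circ\lambda_y$. The same computation shows that $\lambda_x(\lambda_y(z))\dashv(x\dashv y)=(x\vdash y)\vdash z$. By (d) this is $(x\dashv y)\vdash z$, so $\lambda_{x\dashv y}=\lambda_x\circ\lambda_y$ as well.

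The only delicate points are bookkeeping: choosing the right axiom among (b)--(d) at each rewriting, and justifying bijectivity, which is the step where the digroup hypothesis is genuinely used rather than just the disemigroup identities. I expect Step~2 to be the main place to be careful.
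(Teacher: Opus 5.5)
Your proof is correct. Steps 1 and 2 are essentially the paper's argument: right-multiply by $x$ with $\dashv$ and cancel in the left group $(D,\dashv)$, and get bijectivity from unique solvability of $x\vdash y=z\dashv x$ in the right group $(D,\vdash)$.

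The real difference is in Step 3. The paper obtains $\lambda_{x\vdash y}=\lambda_x\circ\lambda_y$ by applying the endomorphism $\lambda_x$ to the defining relation $y\vdash z=\lambda_y(z)\dashv y$, so it relies on Step 1. It gets the $\dashv$-case only indirectly. A semigroup morphism from $(D,\vdash)$ into a group sends idempotents to $\id_D$, so the congruence $\sim$ lies in the kernel of $\lambda$. Then $x\dashv y\sim x\vdash y$ yields $\lambda_{x\dashv y}=\lambda_{x\vdash y}$.

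You instead isolate the single identity $\lambda_x(w)\dashv(x\vdash y)=\lambda_x(w)\dashv(x\dashv y)=x\vdash(w\dashv y)$. It comes from (c), associativity of $\dashv$, and (b), and it gives both composition rules at once; the $\dashv$-case needs one further application of (d).

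Your route is shorter and more elementary. Your Step 3 needs neither the endomorphism property of $\lambda_x$ nor idempotents or $\sim$. It uses only the disemigroup axioms and unique solvability of $u\dashv x=a$ in $u$. Consequently, the identities $\lambda_{x\vdash y}=\lambda_x\circ\lambda_y=\lambda_{x\dashv y}$ hold in any disemigroup whose $\dashv$-reduct is a left group.

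What the paper's route buys is the explicit factorization $\overline{\lambda}\colon D/{\sim}\to\Aut_{\Dg}(D,\vdash,\dashv)$, which ties $\lambda$ to the canonical congruence. From your result this also follows afterwards: $\Aut_{\Dg}(D,\vdash,\dashv)$ is a group, so the adjunction of Theorem~\ref{2}(2) applies.
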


\begin{proof} Let us prove that $\lambda_x\colon D\to D$ is an endomorphism of $(D,\vdash,\dashv)$, that is, that $\lambda_x(y\vdash z)=\lambda_x(y)\vdash\lambda_x(z)$ and $\lambda_x(y\dashv z)=\lambda_x(y)\dashv\lambda_x(z)$ for every $x,y,z\in D$. 

We have that $\lambda_x(y\vdash z)\dashv x=x\vdash y\vdash z=(\lambda_x(y)\dashv x)\vdash z=\lambda_x(y)\vdash x\vdash z=\lambda_x(y)\vdash(\lambda_x(z)\dashv x)=(\lambda_x(y)\vdash\lambda_x(z))\dashv x$. Since $(D,\dashv)$ is right cancellable, it follows that 
$\lambda_x(y\vdash z)=\lambda_x(y)\vdash\lambda_x(z)$.

Similarly, $\lambda_x(y\dashv z)\dashv x=x\vdash y\dashv z=\lambda_x(y)\dashv x\dashv z=\lambda_x(y)\dashv( x\vdash z)=\lambda_x(y)\dashv(\lambda_x(z)\dashv x)=(\lambda_x(y)\dashv\lambda_x(z))\dashv x$. Since $(D,\dashv)$ is right cancellable, it follows that 
$\lambda_x(y\dashv z)=\lambda_x(y)\dashv\lambda_x(z)$. This proves that every $\lambda_x\colon D\to D$ is a digroup endomorphism.

Each $\lambda_x$ is a bijection, because for every $z\in D$ there exists a unique element $y\in D$ such that $x\vdash y=z\dashv x$. Equivalently, for every $z\in D$ there exists a unique $y\in D$ such that $z=\lambda_x(y)$. Therefore each $\lambda_x$ is a bijection, so that $\lambda$ can be seen as a mapping of $D$ into the group $\Aut_\Dg(D,\vdash,\dashv)$.

Let us prove that $\lambda$ is a semigroup morphism of the semigroup $(D,\vdash)$ into the group $\Aut_\Dg(D,\vdash,\dashv)$, i.e., that $\lambda_{x\vdash y}=\lambda_x\circ\lambda_y$ for every $x,y\in D$. Apply $\lambda_x$ to the equality $y\vdash z=\lambda_y(z)\dashv y$, getting $\lambda_x(y)\vdash \lambda_x(z)=(\lambda_x\circ\lambda_y)(z)\dashv \lambda_x(y)$. It follows that $(\lambda_x(y)\vdash \lambda_x(z))\dashv x=((\lambda_x\circ\lambda_y)(z)\dashv \lambda_x(y))\dashv x$, which can be rewritten as $\lambda_x(y)\vdash x \vdash z=(\lambda_x\circ\lambda_y)(z)\dashv( x\vdash y)$. Then $\lambda_{x\vdash y}(z)\dashv(x\vdash y)=x\vdash y\vdash z=(\lambda_x(y)\dashv x)\vdash z=\lambda_x(y)\vdash x\vdash z=(\lambda_x\circ\lambda_y)(z)\dashv( x\vdash y)$. Since $(D,\dashv)$ is right cancellable, it follows that $\lambda_{x\vdash y}(z)=(\lambda_x\circ\lambda_y)(z)$. Hence $\lambda$ is a semigroup morphism of $(D,\vdash)$ into the group $\Aut_\Dg(D,\vdash,\dashv)$. 

In particular, $\lambda$ maps all elements of $\E(D,\vdash)$ to the identity $\id_D$. It follows that the congruence $\sim$ is contained in the kernel of the semigroup morphism $\lambda$, because if $x,y\in D$ and $x\sim y$, for an idempotent element $e\in E(D,\vdash)$ we have that $x\vdash e=y\vdash e$, so that $\lambda_{x\vdash e}=\lambda_{y\vdash e}$, hence $\lambda_x=\lambda_y$, that is, $x$ and $y$ are congruent modulo the kernel of $\lambda$. Thus $\lambda$ induces a group morphism $\overline{\lambda}\colon D/{\sim}\to \Aut_\Dg(D,\vdash,\dashv)$. Then $\lambda$ is also a digroup morphism, because, for every $x,y\in D$, $\lambda(x\dashv y)=\overline {\lambda}([x\dashv y]_\sim)=\overline {\lambda}([x\vdash y]_\sim)={\lambda}(x\vdash y)=\lambda(x)\circ\lambda(y)$.
\end{proof}

This semigroup homomorphism $\lambda$ describes how far the right group structure $(D,\vdash)$ is from the left group structure $(D,\dashv)$. It is very similar to the mapping $\lambda$ of left skew braces or digroups $(D,+, \circ)$, where digroup here is in the sense of \cite{7}. 

\section{Constructions of digroups}

In the previous section we have said that digroups are a gluing of a left group and a right group. Let us see how it is possible to construct a digroup starting from a a group $G$ and a set $X$ that is at the same time a left $G$-set and a right $G$-set (i.e., there are a group homomorphism $G\to\Sym_X$ and a group antihomomorphism $G\to\Sym_X$.)

Let $G$ be a group and $X$ be a left $G$-set, so that $X\times G$ is a  right group with respect to the operation $\vdash$ defined by $(x,g)\vdash (y,h)=(gy, gh)$ for every $x,y\in X$, $g,h\in G$ \cite[Theorem~3.3]{FacFin}. Also, suppose that $X$ also has a right $G$-set structure $X\times G\to X$, $(x,g)\mapsto xg$, so that $X\times G$ is also a left group with respect to the operation $\dashv$ defined by $(x,g)\dashv (y,h)=(xh, gh)$ for every $x,y\in X$, $g,h\in G$. Compatibility of the two operations $\vdash,\dashv$ on $X\times G$ is equivalent to $(gx)h=g(xh)$ for all $x\in X$ and all $g,h\in G$. The elements of $X\times G$ idempotent with respect to both operations $\vdash$ and $\dashv$ are exactly the elements of the form $(x,1_G)$ for every $x\in X$. It's easy to compute that for every element $(x,g)\in X\times G$, the right inverse of $(x,g)$ with respect to the operation $\vdash$ and a right identity $(x_0,1_G)$ is $(g^{-1}x_0, g^{-1})$, and the left inverse of $(x,g)$ with respect to the operation $\dashv$ and the left identity $(x_0,1_G)$ is $(x_0g^{-1}, g^{-1})$. Therefore the two inverses coincide if and only if $x_0g^{-1}=g^{-1} x_0$ for every $g\in G$, that is, if and only if $x_0g=gx_0$ for every $g\in G$.

Fix an element $x_0\in X$ with  $x_0g=gx_0$ for every $g\in G$, so that the two operations $\vdash$ and $\dashv$ on the subset $\{x_0\}\times G$ of $X\times G$ coincide, because $(x_0,g)\vdash(x_0,h)=(gx_0,gh)$ and $(x_0,g)\dashv(x_0,h)=(x_0g,gh)$. The element $\lambda_{(x,g)}(y,h)$ is $(gyg^{-1},ghg^{-1}).$ Notice that $\lambda$ does not depend on the choice of $x_0$, and that $(x_0,1_G)$ is fixed by all these mappings $\lambda_{(x,g)}$. Also, the mapping $\ell\colon G\to\Aut_\Set(X)$, $\ell\colon g\to \ell_g$, with $\ell_g(x)=gxg^{-1}$, is a left action of $G$ on $X$, and the mapping $r\colon G\to\Aut_\Set(X)$, $r\colon g\mapsto r_g$, with $r_g(x)=g^{-1}xg$, is a right action of $G$ on $X$. 
But:

(1) consider the digroup $(X\times G,\vdash,\dashv)$, constructed from the two compatible actions $G\times X\to X$, $(g,x)\mapsto gx$, (the left action of $G$ on $X$), and $X\times G\to X$, $(x,g)\mapsto xg$, (the right left action of $G$ on $X$), in which the operations are defined by $(x,g)\vdash (y,h)=(gy, gh)$ and $(x,g)\dashv (y,h)=(xh, gh)$  for every $x,y\in X$, $g,h\in G$; and

(2) consider the digroup $(X\times G,\vdash',\dashv')$, constructed from the two compatible actions $\pi_2\colon G\times X\to X$, $(g,x)\mapsto x$, and $X\times G\to X$, $(x,g)\mapsto g^{-1}xg$, in which the operations are defined by $(x,g)\vdash' (y,h)=(y, gh)$ and $(x,g)\dashv'(y,h)=(h^{-1}xh, gh)$  for every $x,y\in X$, $g,h\in G$. 

Then the digroups $(X\times G,\vdash,\dashv)$ and $(X\times G,\vdash',\dashv')$ are isomorphic. The isomorphism $(X\times G,\vdash,\dashv)\to (X\times G,\vdash',\dashv')$ is defined by $(x,g)\mapsto(xg^{-1},g)$.  The two actions on the digroup $(X\times G,\vdash',\dashv')$ are constructed from the trivial left $G$-action on $X$ that maps all the elements of $G$ to the identity mapping $X\to X$, and the right action given by $r\colon G\to\Aut_\Set(X)$, $r\colon g\mapsto r_g$, with $r_g(x)=g^{-1}xg$ for every $g\in G$ and $x\in X$.

Similarly, and dually, the digroup $(X\times G,\vdash,\dashv)$ is isomorphic to the digroup $(X\times G,\vdash'',\dashv'')$, constructed from the two compatible actions $\lambda\colon G\times X\to X$, $(g,x)\mapsto gxg^{-1}$, and $\pi_1\colon X\times G\to X$, $(x,g)\mapsto x$, in which the operations are defined by $(x,g)\vdash''(y,h)=(gyx^{-1}, gh)$ and $(x,g)\dashv'' (y,h)=(x, gh)$ for every $x,y\in X$, $g,h\in G$. The two actions on the digroup $(X\times G,\vdash'',\dashv'')$ are constructed from the left action given by $\lambda\colon G\to\Aut_\Set(X)$, with $\lambda_g(x)=gxg^{-1}$ for every $g\in G$ and $x\in X$, and the trivial right $G$-action on $X$ that maps all the elements of $G$ to the identity mapping $X\to X$.

\begin{Ex}\label{5.2} {\rm Let us present \cite[Example 4.2]{Kinyon} in our notations. 
Let $H$ be a group, $M\neq\emptyset$ be a set and let $$\diamond:H\times M\to M, \qquad (h,m)\mapsto h\diamond m,$$ be a left action of $H$ on $M$. Recall that an element $m\in M$ is said to be a fixed point of $M$, with respect to $\diamond$, if $h\diamond m=m$ for all $h\in H$.  Consider the operations $\vdash,\dashv$ on $D:=M\times H$ defined by 
$$
(u,h)\vdash(v,k):=(h\diamond v,hk),
$$
$$
(u,h)\dashv(v,k):=(u, hk),
$$
for all $(u,h),(v,k)\in D$. The following properties are straightforward. 

(1) $(D,\vdash,\dashv)$ is a disemigroup. 

    (2) The set of all bar-units of $D$ is 
	$$
	\U(D)=\{(f,1)\mid f\in M\},
	$$
	where $1$ is the identity element of $H$.
	
    (3) For every $f\in M$, consider the bar-unit $(f,1)$ of $D$, and let $(u,h)\in D$ be arbitrary. Then $(h^{-1}\diamond f,h^{-1})$ (resp., $(f,h^{-1})$) is the unique right (resp., left) inverse of $(u,h)$ in the semigroup $(D,\vdash)$ (resp., $(D,\dashv)$), with respect to $(f,1)$. Thus, $(u,h)$ admits a simultaneous inverse with respect to the bar-unit $(f,1)$ if and only if $h\diamond f=f$. It immediately follows that
	$$
	\I(D)=\{(f,1)\mid f \mbox{ is a fixed point of }M,\mbox{ with respect to }\diamond\}. 
	$$
	
    (4) By (3), $(D,\vdash,\dashv)$ is a digroup if and only if $M$ has a fixed point with respect to $\diamond$.  
\medskip

By the previous remarks, if the set of fixed points of $M$, with respect to $\diamond$, is a non-empty proper subset of $M$ then $(D,\vdash,\dashv)$ is a digroup such that $\I(D)\subsetneq \U(D)$. }
 \end{Ex}
 

%

 A \emph{digroup morphism} is a morphism $f\colon(D,\vdash,\dashv)\to (G,\vdash,\dashv)$ of disemigroups in the case in which both $D$ and $G$ are digroups. Let $(D,\vdash,\dashv)$ be a digroup. A subset $D'$ of $D$ is a \emph{subdigroup of $D$} if $D'$ is a digroup with respect to the restriction of the operations $\vdash,\dashv$ to $D'\times D'$.
If $D'$ is a subdigroup of $D$, then the inclusion is a morphism of digroups. We will freely use the following remarks where we collect some basic facts regarding digroups. 

\begin{remarks}\label{digroup-basic} {\rm
Let $(D,\vdash,\dashv)$ be a digroup. 

(1) If $D'$ is a subdigroup of $D$, it is possible to have that $\I(D')\neq \I(D)$ and $\U(D')\neq \U(D)$. For instance, suppose that $D$ is a digroup such that $\I(D)\subsetneq \U(D)$, and let  $e\in \U(D)\setminus \I(D)$. Then $D':=\{e\}$ is clearly a subdigroup of $D$ and $\U(D')=\I(D')=\{e\}$.

    (2) Let $f\colon(D,\vdash,\dashv)\to (G,\vdash,\dashv)$ be a morphism of digroups. Then $$f(\U(D,\vdash,\dashv))\subseteq \U(G,\vdash,\dashv),$$
	because $f(\E(D,\vdash))\subseteq \E(G,\vdash)$. 
	
    (3) If $f\colon(D,\vdash,\dashv)\to (G,\vdash,\dashv)$ is a constant morphism of digroups, then the unique element of $f(D)$ is a bar-unit of $G$, in view of (2). Conversely, given any bar-unit $g\in G$, the constant mapping $c_g\colon(D,\vdash,\dashv)\to (G\vdash,\dashv)$, $x\to g$, is a morphism of digroups.} 
\end{remarks}

\section{A pretorsion theory in the category of digroups}\label{3'}

We now recall the notions developed in \cite{FF} and \cite{fa-fi-gr} about pretorsion theories in arbitrary categories.
Let $\mathsf{C}$ be a category and $\mathsf{Z}$ be a non-empty class of objects of $\mathsf{C}$. For every pair $A,A'$ of objects of $\mathsf{C}$, we indicate by $\Triv_{\mathsf{Z}}(A, B)$ the set of  all morphisms in $\mathsf{C}$ that factor through an object of $\mathsf{Z}$. These morphisms are called {\em $\mathsf{Z}$-trivial}, or simply {\em trivial}.

If $f\colon A\to A'$ is a morphism in $\mathsf{C}$, a morphism $\varepsilon\colon X\to A$ in $\mathsf{C} $ is a \emph{$\mathsf{Z}$-prekernel} of $f$ if: 
\begin{enumerate}
	\item $f\varepsilon$ is a $\mathsf{Z}$-trivial morphism.
	\item If $\lambda \colon Y\to A$ is any morphism in $\mathsf{C}$ for which $f\lambda$ is $\mathsf{Z}$-trivial, then there exists a unique morphism $\lambda'\colon Y\to X$ in $\mathsf{C}$ such that $\lambda=\varepsilon\lambda'$. 
\end{enumerate}
Dually, a \emph{$\mathsf{Z}$-precokernel} of $f$ is a morphism $\eta\colon A'\to X$ such that:
\begin{enumerate}
	\item $\eta f$ is a $\mathsf{Z}$-trivial morphism.
	\item If $\mu\colon A'\to Y$  is any morphism in $\mathsf{C}$ for which $\mu f$ is $\mathsf{Z}$-trivial, then there exists a unique morphism $\mu'\colon X\to Y$ with $\mu=\mu' \eta$.
\end{enumerate}

If $f\colon A\to B$ and $g\colon B\to C$ are morphisms in $\mathsf{C}$, we say that $$\xymatrix{
	A \ar[r]^f &  B \ar[r]^g &  C}$$ is a \emph{short $\mathsf{Z}$-preexact sequence} in $\mathsf{C}$ if $f$ is a $\mathsf{Z}$-prekernel of $g$ and $g$ is a $\mathsf{Z}$-precokernel of $f$.

  \begin{definition}\label{pretorsion-theory-def} {\rm Let $\mathsf{C}$ be a category, and $\mathsf{T},\mathsf{F}$ be two replete (that is, closed under isomorphism) full subcategories of $\mathsf{C}$. Set $\mathsf{Z}:=\mathsf{T}\cap\mathsf{F}$. The pair $(\mathsf{T},\mathsf{F})$ is a {\em pretorsion theory}  in the category $\mathsf{C}$ if $\mathsf{Z}\ne\emptyset$ and the following properties hold.
  	
	(1) $\Hom_{\mathsf{C}}(T,F)=\Triv_{\mathsf{Z}}(T, F)$  for every object $T\in\mathsf{T}$, $F\in\mathsf{F}$.
	
	  (2) For every object $B$ of $\mathsf{C}$ there is a short $\mathsf{Z}$-preexact sequence $$\xymatrix{
	A \ar[r]^f &  B \ar[r]^g &  C}$$ with $A\in\mathsf{T}$ and $C\in\mathsf{F}$.}\end{definition}

In our case, the category $\mathsf{C}$ is the category $\Dg$ of digroups, the category $\mathsf{F}$ is the category of groups, that is, the category of the digroups $(D,\vdash,\dashv)$ with $\U(D,\vdash,\dashv)$ with exactly one element, and the category $\mathsf{T}$ is the category of all digroups $(X,\pi_2,\pi_1)$ with $X$ a non-empty set. Hence $\mathsf{Z}$ is the subcategory of $\Dg$ consisting of all digroups with exactly one element. Therefore, for every pair $A,A'$ of digroups, $\Triv_{\mathsf{Z}}(A, B)$ is the set of  all constant mappings $c_e\colon A\to B$, where $e\in\U(B)$ and $c_e(a)=e$ for every $a\in A$. Hence there is a bijection between $\Triv_{\mathsf{Z}}(A, B)$ and $\U(B)$.

The category $\Dg$ has no initial object. Indeed, let $D$ be a digroup with at least two distinct bar-units $e_1\neq e_2$ and let $X$ be any digroup; thus the constant mappings $f_i:X\to D$, $x\mapsto e_i$ ($i\in\{1,2\}$) are distinct morphisms in $\Dg$. It follows that in the category $\Dg$ there is no initial object. 

\begin{lemma}\label{Z-prekernel-charact}
Let $f:(D,\vdash,\dashv)\to (G,\vdash,\dashv)$ be a morphism of digroups and let $U:=\U(D,\vdash,\dashv)$. Then $f$ has a $\mathsf{Z}$-prekernel in $\Dg$ if and only if $f(U)$ is a singleton. 
Moreover, if the previous equivalent conditions are satisfied and $f(U)=\{g_0\}$, then $K:=f^{-1}(g_0)$ is a subdigroup of $D$ and the inclusion morphism $i:K\to D$ is a $\mathsf{Z}$-prekernel of $f$. 
\end{lemma}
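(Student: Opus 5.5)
The proof splits into the two implications, and the key observation is that a morphism $h\colon Y\to G$ of digroups is $\mathsf{Z}$-trivial exactly when it is constant. By the discussion before the lemma, such a constant value is automatically a bar-unit of $G$, by Remarks~\ref{digroup-basic}(3).

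\emph{Sufficiency.} Assume $f(U)=\{g_0\}$ and set $K:=f^{-1}(g_0)$. First I check that $K$ is a subdigroup.

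\begin{enumerate}
\item Since $g_0$ is idempotent for both operations, $K$ is closed under $\vdash$ and $\dashv$.
\item $K$ contains $U$, and in particular contains some $e\in\I(D)$.
\item Every $x\in K$ has its simultaneous inverse $x^{\dagger_e}$ in $K$. Indeed, $g_0=f(e)=f(x)\vdash f(x^{\dagger_e})=g_0\vdash f(x^{\dagger_e})=f(x^{\dagger_e})$, because $g_0$ is a left identity for $\vdash$.
\end{enumerate}

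Hence $e\in\I(K)$, and $K$ is a digroup. Now $fi$ is constant with value $g_0$, so it is trivial. Let $\lambda\colon Y\to D$ be such that $f\lambda$ is trivial, say $f\lambda=c_g$. Pick $u\in\U(Y)$. Then $\lambda(u)\in U$ by Remarks~\ref{digroup-basic}(2), so $g=f(\lambda(u))=g_0$. Therefore $\lambda(Y)\subseteq K$, and $\lambda$ factors through $i$. The factorization is unique because $i$ is injective.

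\emph{Necessity.} Suppose $\varepsilon\colon X\to D$ is a $\mathsf{Z}$-prekernel of $f$. For each $u\in U$, consider the one-element digroup $\{*\}$ and the morphism $\lambda_u\colon\{*\}\to D$ with $*\mapsto u$; this is a morphism because $u$ is idempotent for both operations. The composite $f\lambda_u$ is trivial, since its domain lies in $\mathsf{Z}$. So each $u$ lies in $\varepsilon(X)$.

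Next, $f\varepsilon$ is trivial, hence constant, say with value $g_0$. Then for every $u\in U$ we get $f(u)\in f\varepsilon(X)=\{g_0\}$. Thus $f(U)$ is a singleton; it is nonempty because $\I(D)\neq\emptyset$.

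The main point needing care is the claim that trivial morphisms are exactly constant maps to bar-units, in particular that a map factoring through a one-element digroup is constant. This is immediate. A secondary point is remembering that $X$ (and hence $Y$) is a nonempty digroup, so $\U(Y)\ne\emptyset$ by Proposition~\ref{3.9}(e). The necessity argument is in fact easier than it might look, since it only uses the test objects in $\mathsf{Z}$.
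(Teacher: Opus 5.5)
Your proof is correct and follows the paper's argument essentially step by step. You show $K=f^{-1}(g_0)$ is closed under both operations and under simultaneous inverses, you get the factorization through $K$ by evaluating at a bar-unit of the test object, and for necessity you test the prekernel against one-element digroups mapped to each bar-unit. Your choice of $e\in\I(D)$, rather than an arbitrary $e\in U$ as written in the paper, is the careful one, since simultaneous inverses $x^{\dagger_e}$ are only guaranteed when $e$ is a central idempotent.
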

\begin{proof}
First, assume that $f(U)=\{g_0\}$; in particular, $g_0$ is a bar-unit of $G$, by Remark \ref{digroup-basic}(2), and thus $g_0$ is idempotent with respect to $\vdash$ and $\dashv$. From this fact it immediately follows that $x\vdash y,x\dashv y\in K$ for every $x,y\in K$. Fix an element $e\in U$. Then, for every $x\in K$, we have
$$
g_0\vdash g_0=g_0=f(e)=f(x)\vdash f(x^{\dagger_e})=g_0\vdash f(x^{\dagger_e}), 
$$
and thus $f(x^{\dagger_e})=g_0$ because the semigroup $(G,\vdash)$ is left cancellative. It follows that $x^{\dagger_e}\in K$ for every $x\in K$, and thus $K$ is a subdigroup of $D$. By construction, the composite mapping $fi$ is trivial. Consider now any morphism $\lambda:Y\to D$ of digroups such that $f\lambda$ is trivial, that is, there is a bar-unit $g_1\in G$ such that $f(\lambda(Y))=\{g_1\}$. Since $Y$ has a bar-unit $y_0$ and $\lambda(y_0)\in U$, it immediately follows that $g_1=g_0$, proving that $\lambda(Y)\subseteq K$. Hence the mapping $\lambda':Y\to K$, $y\mapsto \lambda(y)$, is the unique morphism in $\Dg$ satisfying $\lambda=i\lambda'$. This proves that $i$ is a $\mathsf{Z}$-prekernel of $f$. 

Conversely, assume that $f$ has a $\mathsf{Z}$-prekernel $j:\Lambda\to D$. In particular, $fj$ is trivial, that is, $f(j(\Lambda))=\{g_0\}$ for some bar-unit $g_0\in G$. Since $\Lambda$ has some bar-unit $l_0$ and $j(l_0)\in U$, it follows that $g_0\in f(U)$. Consider now any bar-unit $e\in U$. Then the inclusion $\iota:\{e\}\to D$ is a morphism in $\Dg$ and the composite mapping $f\iota$ is a constant morphism of digroups, and thus it is trivial by Remark \ref{digroup-basic}(3). Since $j$ is a $\mathsf{Z}$-prekernel of $f$, there is a unique morphism $\iota_0:\{e\}\to \Lambda$ such that $\iota=j\iota_0$. It follows that $e\in j(\Lambda)$ and thus $f(j(\Lambda))=\{g_0\}$ implies that $f(e)=g_0$. This proves that $f(U)=\{g_0\}$, and the conclusion follows. 
\end{proof}
\begin{lemma}\label{pretorsion-ruciale}
Let $(D,\vdash,\dashv)$ be a digroup and let $\mu\colon D\to T$ be a digroup morphism such that $\mu(\U(D))$ is a singleton. Then the kernel $\sim$ of the canonical projection $\pi\colon D\to D/{\sim}$ is contained in the kernel of $\mu$. In particular, there exists a unique digroup morphism $\mu'\colon D/{\sim}\to T$ such that $\mu=\mu'\pi$. 
\end{lemma}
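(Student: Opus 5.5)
We must show that $x\sim y$ implies $\mu(x)=\mu(y)$. Let $\mu(\U(D))=\{t_0\}$. The most convenient description of $\sim$ is the one in the earlier proposition: fix $e\in\I(D)$, which exists because $D$ is a digroup. Then $x\sim y$ if and only if $x\vdash e=y\vdash e$, and $\sim$ coincides with the congruence generated by the pairs $(x\vdash y,x\dashv y)$.

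The plan is to write each element as a product of its ``group part'' and an idempotent. For $x\in D$, put $\tau(x)=x\backslash x$, where $\backslash$ is the left inverse of $\vdash$ in the right group $(D,\vdash)$. Then $\tau(x)\in\E(D,\vdash)=\U(D)$ by Proposition~\ref{3.9}(e), and $x=(x\vdash e)\vdash\tau(x)$. This identity is the one used in the proof of that proposition. It holds because $x\vdash e\vdash\tau(x)=x\vdash\tau(x)$, since $\tau(x)$ is a left identity, and $x\vdash\tau(x)=x$.

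Now suppose $x\sim y$, so $x\vdash e=y\vdash e$. Since $\mu$ is a morphism,
$$\mu(x)=\mu(x\vdash e)\vdash\mu(\tau(x))=\mu(x\vdash e)\vdash t_0 .$$
Likewise $\mu(y)=\mu(y\vdash e)\vdash t_0$. The right-hand sides are equal, so $\mu(x)=\mu(y)$. This proves that $\sim$ is contained in the kernel of $\mu$.

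For the second assertion, $\pi\colon D\to D/{\sim}$ is a surjective digroup morphism; $D/{\sim}$ is a group, hence a digroup, by Proposition~\ref{3}(b). Since $\sim\subseteq\ker\mu$, the map $\mu'([x]_\sim)=\mu(x)$ is well defined. It preserves $\vdash$ and $\dashv$ because $\pi$ is surjective and both $\pi$ and $\mu$ preserve the operations. Uniqueness of $\mu'$ also follows from the surjectivity of $\pi$.

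The only step needing care is the decomposition $x=(x\vdash e)\vdash\tau(x)$ together with $\tau(x)\in\U(D)$. Both follow from the right-group structure and Proposition~\ref{3.9}(c),(e), so no real obstacle is expected.
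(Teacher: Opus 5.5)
Your proof is correct and is essentially the paper's own argument. You decompose $x=(x\vdash e)\vdash(x\backslash_\vdash x)$ with $x\backslash_\vdash x\in\U(D)$, so that $\mu(x)=\mu(x\vdash e)\vdash t_0$ depends only on the $\sim$-class of $x$, and then factor $\mu$ through the surjection $\pi$. The paper uses an arbitrary idempotent $e_0$ where you use $e\in\I(D)$, which makes no difference. One small slip: the step $(x\vdash e)\vdash\tau(x)=x\vdash\tau(x)$ holds because $e$ is a left identity for $\vdash$ (so $e\vdash\tau(x)=\tau(x)$), not because $\tau(x)$ is one.
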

\begin{proof} Fix any idempotent element $e_0$ of $D$. Let $x,y\in D$ be such that $x\sim y$. Then $x\vdash e_0=y\vdash e_0$. If $\backslash_\vdash$ is the left inverse operation of $\vdash$, then $x=(x\vdash e_0)\vdash (x\backslash_\vdash x)$ and $y=(y\vdash e_0)\vdash (y\backslash_\vdash y)$. Moreover, $x\backslash_\vdash x, y\backslash_\vdash y\in \U(D)$, so that $\mu(x\backslash_\vdash x)=\mu(y\backslash_\vdash y)$. Therefore $\mu(x)=\mu(y)$. This proves that $\sim$ is contained in the kernel of $\mu$.

From this it follows that $\mu\colon D\to T$ induces a unique digroup morphism $\mu'\colon D/{\sim}\to T$, i.e., there is a unique morphism $\mu'\colon D/{\sim}\to T$ such that 
	$\mu=\mu'\pi$. \end{proof}

We will now show that the classes $\mathsf{T}$ and $\mathsf{F}$ constitute a pretorsion theory in $\Dg$: for every digroup $D$  there is a short $\mathsf{Z}$-preexact sequence $$\xymatrix{
	\U(D) \ar[r]^i &  D \ar[r]^\pi &  D/{\sim}}$$ with $\U(D)\in\mathsf{T}$ and $D/{\sim}\in\mathsf{F}$.
    
\begin{theorem}\label{pretorsion}
The pair $(\mathsf{T},\mathsf{F})$ is a pretorsion theory in $\Dg$. 
\end{theorem}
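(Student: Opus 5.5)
We must check the conditions of Definition~\ref{pretorsion-theory-def}. The classes $\mathsf{T}$ and $\mathsf{F}$ are replete full subcategories: an isomorphic copy of a group is a digroup whose two operations coincide, and an isomorphic copy of $(X,\pi_2,\pi_1)$ again has $\vdash=\pi_2$ and $\dashv=\pi_1$. Their intersection $\mathsf{Z}$ consists of the one-element digroups. Indeed, if $(X,\pi_2,\pi_1)$ is a group, then $y=x\,\pi_2\,y=x\,\pi_1\,y=x$ for all $x,y$. Hence $\mathsf{Z}\neq\emptyset$.

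For condition (1), take $T=(X,\pi_2,\pi_1)$ and a group $F=(G,\cdot,\cdot)$, and let $f\colon T\to F$ be a digroup morphism. Every element of $X$ is a bar-unit of $T$, so by Remarks~\ref{digroup-basic}(2) every $f(x)$ lies in $\U(F)=\{1_G\}$. Thus $f=c_{1_G}$ is constant, and it factors through the one-element digroup. This shows $\Hom(T,F)=\Triv_{\mathsf{Z}}(T,F)$.

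For condition (2), fix a digroup $D$ and consider the sequence $\U(D)\xrightarrow{i} D\xrightarrow{\pi} D/{\sim}$.

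\begin{enumerate}
\item $\U(D)$ lies in $\mathsf{T}$. By Proposition~\ref{3.9}(c), for $e,f\in\U(D)$ we have $e\vdash f=f$ and $e\dashv f=e$. So $\U(D)$ is closed under both operations and equals $(\U(D),\pi_2,\pi_1)$, which is a digroup and hence an object of $\mathsf{T}$.
\item $D/{\sim}$ lies in $\mathsf{F}$, by Proposition~\ref{3}(b).
\item $i$ is a $\mathsf{Z}$-prekernel of $\pi$. By Theorem~\ref{2}(1), $\pi(\U(D))$ is a single point $g_0=[e]_\sim$, the identity of the group $D/{\sim}$. Lemma~\ref{Z-prekernel-charact} then says that the inclusion $K=\pi^{-1}(g_0)\to D$ is a $\mathsf{Z}$-prekernel. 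It remains to show $K=\U(D)$. Fix $e\in\I(D)$. If $x\sim e$, then $x\vdash e=e\vdash e=e$. Writing $x=(x\vdash e)\vdash(x\backslash x)$ as in the proof of Lemma~\ref{pretorsion-ruciale}, we get $x=e\vdash(x\backslash x)=x\backslash x$, which is idempotent. Hence $K\subseteq\U(D)$, and the reverse inclusion is Theorem~\ref{2}(1).
\item $\pi$ is a $\mathsf{Z}$-precokernel of $i$.
 \begin{itemize}
 \item $\pi i$ is constant with value in a one-element image, so it is trivial.
 \item Let $\mu\colon D\to Y$ be a morphism with $\mu i$ trivial, that is, $\mu(\U(D))$ is a singleton. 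Lemma~\ref{pretorsion-ruciale} gives a unique $\mu'$ with $\mu=\mu'\pi$.
 \item Uniqueness of $\mu'$ also follows directly from the surjectivity of $\pi$.
 \end{itemize}
\end{enumerate}

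The main obstacle is the identification $\pi^{-1}([e]_\sim)=\U(D)$ in item 3, since it is the only step not read off directly from earlier results. The argument above settles it using the right group decomposition $x=(x\vdash e)\vdash(x\backslash x)$.
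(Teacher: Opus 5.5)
Your proof is correct and follows the paper's own route. Condition (1) holds because idempotents must map to $1_G$. The $\mathsf{Z}$-prekernel comes from Lemma~\ref{Z-prekernel-charact}, and the $\mathsf{Z}$-precokernel from Lemma~\ref{pretorsion-ruciale}. Your explicit check that $\pi^{-1}([e]_\sim)=\U(D)$, and that $\U(D)\in\mathsf{T}$ and $D/{\sim}\in\mathsf{F}$, supplies steps the paper uses tacitly when it applies Lemma~\ref{Z-prekernel-charact} directly to the inclusion of $\U(D)$.
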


\begin{proof}
By definition, $\mathsf{Z}:=\mathsf{T}\cap\mathsf{F}$ consists of all digroups with exactly one element. Consider a set $X$ and a group $(G,\cdot)$, so that we have the digroups $(X,\pi_2,\pi_1)$ and $(G,\cdot,\cdot)$. Let $\varphi\colon (X,\pi_2,\pi_1)\to (G,\cdot,\cdot)$ be any morphism in $\Dg$. Given any element $x\in X$ we have that $x\vdash x=x$ and thus $\varphi(x)\cdot \varphi(x)=\varphi(x)$. Since $G$ is a group, we have $\varphi(x)=1_G$, where $1_G$ is the identity element of $G$. Thus $\varphi$ is trivial.

Now let $(D,\vdash,\dashv)$ be any digroup, and consider the group $D/{\sim}$ and  the digroup canonical morphism $\pi\colon D\to D/{\sim}$. Since $\pi(\U(D,\vdash,\dashv))=\{1_{D/{\sim}}\}$, we get from Lemma \ref{Z-prekernel-charact} that the inclusion $i\colon \U(D,\vdash,\dashv)\to D$ is a $\mathsf{Z}$-prekernel of $\pi$. It remains to show that $\pi$ is a $\mathsf{Z}$-precokernel of $i$. Let $\mu:D\to T$ be any morphism in $\Dg$ such that $\mu i$ is trivial. This means that $\mu(\U(D,\vdash,\dashv))$ is a singleton. If we show that there exists a digroup morphism $$\mu'\colon D/{\sim}\to (T,\vdash,\dashv)$$ such that $\mu=\mu'\pi$, then such a $\mu'$ is necessarily unique.
This is an immediate consequence of Lemma~\ref{pretorsion-ruciale}. \end{proof}

\section{Diheaps}\label{7}

In the paper \cite[Section 2]{AFMS2}, it was shown that it is possible to associate to any right group $(G,\cdot)$ a right heap $(G,[-,-,-])$, where the ternary operation $[-,-,-]\colon G\times G\times G\to G$ is defined by $[x,y,z]=x\cdot(y\backslash z)$ for every $x,y,z\in G$. Here $\backslash$ denotes the left inverse of the operation $\cdot$, that is, $x\backslash y$ is the unique element of $G$ such that $x\cdot(x\backslash y)=y$. A {\em right heap} is a pair $(H,[-,-,-])$, where $H$ is a set and $[-,-,-]$ is a ternary operation on $H$ that is associative (that is, $[x,y,[z,t,w]]=[[x,y,z],t,w]$ for every $x,y,z,t,w\in H$), {\em left Mal'tsev} (that is, $[x,x,y]=y$ for every $x,y\in H$), and {\em right weakly Mal'tsev} (i.e., $[x,y,[y,z,w]]=[x,z,w]$ for every $x,y,z,w\in H$). In this way, it is possible to extend the theory of heaps from groups to right groups. It is now obvious that it is possible to give the following definition:

\begin{definition}\label{dh} A {\em diheap} is a triplet $(H,[-,-,-],(-,-,-))$, where $H$ is a set and $[-,-,-],(-,-,-)\colon H\times H\times H\to H$ are two ternary operations on $H$ such that:

{\em (a) } $[x,y,[z,t,w]]=[[x,y,z],t,w]$ \ and\ \ $(x,y,(z,t,w))=((x,y,z),t,w);$

{\em (b) } $[x,x,y]=y$ \ and\ \ $(x,y,y)=x$;

{\em (c) } $[x,y,[y,z,w]]=[x,z,w]$ \ and\ \ $((x,y,z),z,w)=(x,y,w)$;

{\em (d) } $[x,y,(z,t,w)]=([x,y,z],t,w)$  

\noindent for every $x,y,z,t,w\in H$; and

{\em (e) } there exists $e\in H$ such that $[e,x,e]=(e,x,e)$ for every $x\in H$.

\medskip

For a diheap $(H,[-,-,-],(-,-,-))$, we will denote by $$\I(H,[-,-,-],(-,-,-))$$ the non-empty set of all $e\in H$ such that $[e,x,e]=(e,x,e)$ for every $x\in H$.
\end{definition}

An elementary example of a diheap is, for any non-empty set $X$, the diheap $(X,\pi_3,\pi_1)$. Here $\pi_3,\pi_1\colon X\times X\times X\to X$ are the third and the first projection, respectively.

\medskip

The next lemma follows immediately from Definition~\ref{dh}.

\begin{lemma}\label{7.2} {\rm (a)} If $(H,[-,-,-])$ is a non-empty heap, then $$(H,[-,-,-],[-,-,-])$$ is a diheap.

{\rm (b)} If $(H,[-,-,-],(-,-,-))$ is a diheap and $[x,y,z]=(x,y,z)$ for every $x,y,x\in H$, then $(H,[-,-,-])$ is a non-empty heap.

{\rm (c)} If $(H,[-,-,-],(-,-,-))$ is a diheap and $\sim$ is the congruence on $H$ generated by the subset $\{\,([x,y,z],(x,y,z))\mid x,y,z\in H\,\}$ of $H\times H$, then $H/{\sim}$ is a non-empty heap.
    \end{lemma}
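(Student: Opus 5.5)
The plan is to check each part directly against Definition~\ref{dh}, using the standard definition of a heap: a set with a ternary operation $[-,-,-]$ that is associative and satisfies both Mal'tsev identities $[x,x,y]=y$ and $[x,y,y]=x$. Since all conditions (a)--(d) of Definition~\ref{dh} are identities, the whole lemma reduces to equational manipulations, plus a check of the existential condition (e) and of non-emptiness.

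For (a), let $(H,[-,-,-])$ be a non-empty heap and take both ternary operations of the triplet to be $[-,-,-]$. Conditions (a) and (d) of Definition~\ref{dh} are then just associativity of $[-,-,-]$. Condition (b) consists of the two Mal'tsev identities. The first identity in (c) follows from associativity and the right Mal'tsev identity:
\[
[x,y,[y,z,w]]=[[x,y,y],z,w]=[x,z,w].
\]
Symmetrically, the second identity in (c) follows from associativity and the left Mal'tsev identity:
\[
([x,y,z],z,w)=[x,y,[z,z,w]]=[x,y,w].
\]
Condition (e) holds trivially for any $e\in H$, because the two operations coincide. Such an $e$ exists because $H$ is non-empty.

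For (b), suppose the two operations of a diheap coincide. Then the first identity in (a) gives associativity of $[-,-,-]$, and the two identities in (b) give the two Mal'tsev identities. Hence $(H,[-,-,-])$ is a heap. It is non-empty because (e) asserts the existence of an element $e\in H$.

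For (c), I would argue in two steps. First, $\sim$ is by definition a congruence for both ternary operations, so $H/{\sim}$ inherits two well-defined ternary operations. These are given by
\[
[[x],[y],[z]]:=[[x,y,z]] \quad\text{and}\quad ([x],[y],[z]):=[(x,y,z)].
\]
Every identity in (a)--(d) holds in the quotient, because identities pass to homomorphic images. Condition (e) also holds: if $e\in\I(H)$, then $[e]$ satisfies $[[e],[x],[e]]=([e],[x],[e])$ for every class $[x]$. Hence $H/{\sim}$ is a diheap. Second, because $\sim$ contains every generating pair $([x,y,z],(x,y,z))$, the two induced operations agree on all triples of classes. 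Part (b) then shows that $H/{\sim}$ is a non-empty heap.

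The only point needing care is this last step. One must use that \emph{every} triple of classes is represented by some triple $(x,y,z)$ in $H$, so that the generating pairs force equality of the two operations on all of $H/{\sim}$, and not merely on some subset. Apart from that, everything is a routine verification.
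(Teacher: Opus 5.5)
Your proposal is correct and is essentially the paper's approach: the paper gives no written argument and simply says the lemma follows immediately from Definition~\ref{dh}. Your item-by-item check of conditions (a)--(e) is exactly the verification that remark leaves implicit, including passing the identities and condition (e) to the quotient $H/{\sim}$ and then invoking part (b).
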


Recall that if $(D,\vdash,\dashv)$ is a digroup, then $(D,\vdash)$ is a right group, so that $\vdash$ has a left inverse operation $\backslash_\vdash$, thus  $a\backslash_\vdash b$ is the unique element $t\in D$ such that $a\vdash t=b$. Similarly,  $(D,\dashv)$ is a left group, so that $\dashv$ has a right inverse operation $/_{\!\dashv}$, i.e.,  $a/_{\!\dashv} b$ is the unique element $u\in D$ such that $u\dashv b=a$. 

\begin{theorem}\label{xxx} There is a faithful, essentially surjective functor $F$ from 
the category $\Dg$ of digroups to the category $\DHp$ of diheaps. It associates to every digroup 
    $(D,\vdash,\dashv)$ the diheap $$(D,[-,-,-],(-,-,-)),$$ where $[-,-,-]$ and $(-,-,-)$ are the ternary operations on $D$ defined by $[a,b,c]=a\vdash(b\backslash_\vdash c)$ and $(a,b,c)=(a/_{\!\dashv} b)\dashv c$  for all $a,b,c\in D$. The functor $F$ is the identity on morphisms.
\end{theorem}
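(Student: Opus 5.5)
The plan has four parts: show that $F$ lands in $\DHp$, show that it is a functor, note faithfulness, and prove essential surjectivity by building a digroup from a pointed diheap.

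\emph{$F(D)$ is a diheap.} I will use the defining properties $a\vdash(a\backslash_\vdash b)=b=a\backslash_\vdash(a\vdash b)$ and $(a/_{\!\dashv}b)\dashv b=a=(a\dashv b)/_{\!\dashv}b$, together with left cancellation in $(D,\vdash)$ and right cancellation in $(D,\dashv)$ from Proposition~\ref{3.9}(a),(b). Four auxiliary identities do most of the work:
$$y\backslash_\vdash(z\vdash u)=(y\backslash_\vdash z)\vdash u,\quad y\backslash_\vdash(u\dashv w)=(y\backslash_\vdash u)\dashv w,$$
$$(a\vdash b)/_{\!\dashv}t=a\vdash(b/_{\!\dashv}t),\quad (u\dashv w)/_{\!\dashv}t=u\dashv(w/_{\!\dashv}t).$$
Each one is proved by multiplying both sides by $y$ on the left (respectively by $t$ on the right), using associativity and the axiom $x\vdash(y\dashv z)=(x\vdash y)\dashv z$, and then cancelling.

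With these in hand the axioms follow routinely:
\begin{itemize}
\item (a) for $[-,-,-]$ is the first identity combined with associativity of $\vdash$; the statement for $(-,-,-)$ follows symmetrically by Remark~\ref{simmetria}.
\item (b) is immediate, since $[x,x,y]=x\vdash(x\backslash_\vdash y)=y$ and $(x,y,y)=(x/_{\!\dashv}y)\dashv y=x$.
\item (c) follows from $y\backslash_\vdash(y\vdash v)=v$ and its mirror image.
\item (d) follows from the second and third identities: both sides equal $x\vdash\bigl((y\backslash_\vdash z)/_{\!\dashv}t\bigr)\dashv w$.
\item (e): take $e\in\I(D)$. Then $[e,x,e]=x\backslash_\vdash e$ is the right inverse of $x$ for $\vdash$ with respect to $e$, and $(e,x,e)=e/_{\!\dashv}x$ is the left inverse for $\dashv$. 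Since $e\in\I(D)$, $x$ has a simultaneous inverse, and by uniqueness of one-sided inverses in right and left groups both of these equal $x^{\dagger_e}$.
\end{itemize}

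\emph{Functoriality and faithfulness.} A digroup morphism $f$ preserves $\backslash_\vdash$ and $/_{\!\dashv}$: from $f(a)\vdash f(a\backslash_\vdash b)=f(b)$ and uniqueness we get $f(a\backslash_\vdash b)=f(a)\backslash_\vdash f(b)$, and similarly for $/_{\!\dashv}$. Hence $f$ preserves both ternary operations. Since $F$ is the identity on underlying maps, it is a functor and it is faithful.

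\emph{Essential surjectivity.} This is the main step. Given a diheap $H$, fix $e\in\I(H)$ and define $x\vdash y=[x,e,y]$ and $x\dashv y=(x,e,y)$. By Phillips' characterization quoted above, it suffices to check three things.
\begin{itemize}
\item Associativity of both operations is axiom (a) with $y=t=e$.
\item The mixed law $x\vdash(y\dashv z)=(x\vdash y)\dashv z$ is axiom (d).
\item $e\in\I(D)$. First, $e\vdash x=[e,e,x]=x$ and $x\dashv e=(x,e,e)=x$, so $e$ is a bar-unit. Next set $x^\dagger=[e,x,e]=(e,x,e)$, the equality holding by axiom (e). Then $x\vdash x^\dagger=[x,e,[e,x,e]]=[x,x,e]=e$ by (c) and (b), and $x^\dagger\dashv x=((e,x,e),e,x)=(e,x,x)=e$ by (c) and (b).
\end{itemize}
Finally I will show that $F$ applied to this digroup returns exactly the operations of $H$, so that the identity map is the required isomorphism. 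We have $b\vdash[e,b,c]=[b,e,[e,b,c]]=[b,b,c]=c$, so $b\backslash_\vdash c=[e,b,c]$. Therefore $a\vdash(b\backslash_\vdash c)=[a,e,[e,b,c]]=[a,b,c]$ by (c). Symmetrically, $a/_{\!\dashv}b=(a,b,e)$ and $((a,b,e),e,c)=(a,b,c)$.

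The main obstacle is the bookkeeping in the essential surjectivity step, in particular checking that the mixed digroup law and the centrality of $e$ come exactly from axioms (d) and (e). Once that is in place, the remaining verifications are direct.
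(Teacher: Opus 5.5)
Your proof is correct and follows the paper's skeleton: the same argument for axiom (e) via uniqueness of one-sided inverses, and the same construction $x\vdash y=[x,e,y]$, $x\dashv y=(x,e,y)$ for essential surjectivity. The verifications, however, take a different route.

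\textbf{The diheap axioms.} The paper gets (a)--(c) and the recovery of the ternary operations by citing \cite[Theorem~2.8]{AFMS2}. It proves (d) by writing $w=t\vdash f$ and $x=g\dashv y$, then pushing the computation through with axioms (b), (c) and (d) of Definition~\ref{disemigroup}. You instead derive four commutation identities for $\backslash_\vdash$ and $/_{\!\dashv}$ from cancellation and the mixed law alone. This makes the argument self-contained, and it shows that (a)--(d) of Definition~\ref{dh} use only associativity, the mixed law, and the one-sided group structures. The paper's version is shorter because it outsources to \cite{AFMS2}.

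\textbf{A compressed step in (d).} For the left-hand side you also need
$$y\backslash_\vdash(z/_{\!\dashv}t)=(y\backslash_\vdash z)/_{\!\dashv}t.$$
This does follow: apply your second identity with $u=z/_{\!\dashv}t$ and with $t$ in place of $w$, then use uniqueness in the left group $(D,\dashv)$.

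\textbf{Essential surjectivity.} Invoking Phillips' characterization buys something real. For $(H,\vdash,\dashv)$ the paper checks only the mixed law (b) of Definition~\ref{disemigroup} and leaves (c) and (d) implicit; your route covers them. You also verify explicitly that $F$ applied to $(H,\vdash,\dashv)$ returns the original ternary operations, instead of deferring to \cite{AFMS2}.
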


\begin{proof} Let $(D,\vdash,\dashv)$ be a digroup.
    For the right group $(D,\vdash)$, we know that $(D,[-,-,-])$ is a right heap by \cite[Theorem~2.8]{AFMS2}. Similarly for the left group $(D,\dashv)$ and its corresponding left heap $(D,(-,-,-))$. Thus properties (a), (b) and (c) of Definition~\ref{dh} are satisfied for the algebra $(D,[-,-,-],$\linebreak $(-,-,-))$. 
    
   As far as (d) is concerned, we have that  $$[x,y,(z,t,w)]=x\vdash(y\backslash_\vdash ((z/_{\!\dashv} t)\dashv w))=x\vdash a,$$ where 
  \begin{equation}
    \label{eq-a}
      y\vdash a=b \dashv w\quad
  \end{equation}
  and
  \begin{equation}
    \label{eq-b}
    b \dashv t=z.\quad
  \end{equation}
  
Similarly, $$([x,y,z],t,w)=((x\vdash(y\backslash_\vdash z))/_{\!\dashv} t)\dashv w=((x\vdash c)/_{\!\dashv} t)\dashv w=d\dashv w,$$  where $c,d\in D$ satisfy the two conditions 
  \begin{equation}
    \label{eq-c}
      y\vdash c=z\quad
  \end{equation}
 and \begin{equation}
    \label{eq-d}
    d\dashv t=x\vdash c.\quad 
  \end{equation}

Hence, to prove (d)  of Definition~\ref{dh}, it suffices to prove that the four equalities \eqref{eq-a}, \eqref{eq-b}, \eqref{eq-c} and \eqref{eq-d} imply that $x\vdash a=d\dashv w$. Now \eqref{eq-b} and \eqref{eq-c} imply that $b \dashv t=y\vdash c$. Also, 
$w=t\vdash f$ for some $f\in D$, and $x=g\dashv y$ for some $g\in D$, so that $x\vdash a=(g\dashv y)\vdash a=(g\vdash y)\vdash a=g\vdash (y\vdash a)=g\vdash (b \dashv w)=(g\vdash b)\dashv w=(g\vdash b)\dashv (t\vdash f)=
(g\vdash b)\dashv t\dashv f=g\vdash (y\vdash c)\dashv f=((g\vdash y)\vdash c)\dashv f=((g\dashv y)\vdash c)\dashv f=(x\vdash c)\dashv f=(d\dashv t)\dashv f=d\dashv w$. This concludes the proof of (d).

            For (e), fix an element $e\in \I(D,\vdash,\dashv)$. Then, for every $x\in D$, we have that $[e,x,e]=e\vdash(x\backslash_\vdash e)=x\backslash_\vdash e$ is the right inverse of $x$ with respect to the operation $\vdash$ relatively to the left identity $e$. Similarly $(e,x,e)$  is the left inverse of $x$ with respect to the operation $\dashv$ relatively to the right identity $e$. But $e\in \I(D,\vdash,\dashv)$ implies that the two inverses coincide, for every $x\in D$. Therefore $[e,x,e]=(e,x,e)$ for every $x\in D$.

    It is now clear that  $F$, that associates to each digroup $(D,\vdash,\dashv)$ the diheap $(D,[-,-,-],(-,-,-))$ and is the identity on morphisms, is a functor $\Dg\to\DHp$, which is obviously faithful.

    In order to prove that $F$ is essentially surjective, fix a diheap $H$ and an element $e\in \I(H)$. Define two binary operations $\vdash_e$ and $\dashv_e$ on $H$ setting, for every $x,y\in H$, $x\vdash_e y=[x,e,y]$ and $x\dashv_e y=(x,e,y)$. From \cite[Theorem~2.8]{AFMS2} we know that $(D,\vdash_e)$ is a right group, $(D,\dashv_e)$ is a left group, and $e\in\E(D,\vdash_e)\cap\E(D,\dashv_e)$. Condition (d) in Definition~\ref{dh} implies the compatibility between $\vdash_e$ and $\dashv_e$, that is condition (b) in Definition~\ref{disemigroup}.

    For (e), we have that $[e,x,e]$ is the right inverse of $x$ with respect to the operation $\vdash_e$ relatively to the left identity $e$, because $x\vdash_e [e,x,e]=[x,e,[e,x,e]]=[x,x,e]=e$. Similarly $(e,x,e)$  is the left inverse of $x$ with respect to the operation $\dashv_e$ relatively to the right identity $e$. Thus condition (e) in Definition~\ref{dh} implies that $e\in I(H,\vdash_e,\dashv_e)$.
    
    The proof that $F(D,\vdash_e,\dashv_e)=(D,[-,-,-](-,-,-))$ is the same as that  in \cite[proof of Theorem~2.8]{AFMS2}.
\end{proof}

We will now elaborate on Theorem~\ref{xxx} and its proof.

As in \cite[Proposition 2.10]{AFMS2}, if we fix a pointed set $(A,a_0)$, then there is a bijection between the set $$\{\,(\vdash,\dashv)\,\mid (A,\vdash,\dashv)\ \mbox{\rm is a digroup and}\  a_0\in\I(A,\vdash,\dashv)\,\}$$ of all pairs $(\vdash,\dashv)$ of binary operations $A$ such that $(A,\vdash,\dashv)$ turns out to be a digroup in which $a_0$ is central idempotent, and the set\linebreak $\{\,([-,-,-],(-,-,-))\,\mid (A,[-,-,-],(-,-,-))$ is a diheap and\linebreak $a_0\in\I(A,[-,-,-],(-,-,-))\,\}$ of all  pairs $([-,-,-],(-,-,-))$ of ternary operations on $A$ for which $(A,[-,-,-],(-,-,-))$ is a diheap and $a_0\in\I(A,[-,-,-],(-,-,-))\,\}$. The proof of this is essentially the same as that of Theorem~\ref{xxx}. If $(\vdash,\dashv)$ and $([-,-,-],(-,-,-))$ correspond in this bijection, that is, $x\vdash y=[x,a_0,y]$ and $x\dashv y=(x,a_0,y)$ for every $x,y\in A$, then:

(1) An equivalence relation on $A$ is a congruence for the digroup $(A,\vdash,\dashv)$, that is, is compatible with  the four operations $\vdash,\ \dashv,\ \backslash_\vdash$ and $ /_\dashv$, if and only if it is compatible with both ternary operations $[-,-,-]$ and $(-,-,-)$.

    (2) A subset of $A$ containing $a_0$ is closed for the four operations $\vdash,\ \dashv,\ \backslash_\vdash$ and $ /_\dashv$ if and only if it  is closed for both operations $[-,-,-]$ and $(-,-,-)$.
      
      (3) Let $f\colon A\to A$ be an idempotent mapping. If $f$ is a digroup endomorphism of the digroup $(A,\vdash,\dashv)$, then $f$ is a diheap endomorphism of $(A,[-,-,-],(-,-,-))$. Conversely, if $f$ is a right heap endomorphism of $(A,[-,-,-],(-,-,-))$ and $f(a_0)=a_0$, then $f$ is a digroup endomorphism of the digroup $(A,\vdash,\dashv)$.

      The proof of (1)--(3) follows immediately from \cite[Proposition 2.10]{AFMS2}.

\bigskip

Recall \cite[Proposition 2.11]{AFMS2} that for any element $a$ in a right heap $H$, there are two idempotent right heap endomorphisms $p_a,q_a\colon H\to H$, defined by $p_a(x)=[x,a,a]$ and $q_a(x)=[a,x,x]$ respectively, for every $x\in H$. The images of $p_a$ and $q_a$ are denoted by $G_a(H)$ and $E_a(H)$ respectively. Also $G_a(H)$ turns out to be a heap, and 
  $E_a(H)$ is a right zero heap, that is, a right heap in which $[x,y,z]=z$ for every $x,y,z\in E_a(H)$.

\begin{lemma} If $(H,[-,-,-], (-,-,-))$ is a diheap, then $$\I(H,[-,-,-],(-,-,-))=\{\, e\in H\mid [x,e,e]=(e,e,x)\ \mbox{\rm for every }x\in H\,\}.$$ Moreover, for every $a\in H$, $$\I(H,\vdash_a,\dashv_a)=\I(H,[-,-,-], (-,-,-))\cap E_a(H).$$\end{lemma}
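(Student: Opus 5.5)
The plan is to prove both assertions by reducing them to identities that hold in one-sided heaps, using Proposition~\ref{3.9}(d) as the conceptual guide. Two preliminary computations make that guide explicit.

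First, in the right group $(H,\vdash_e)$ with identity $e$, the element $[e,x,e]$ is the right inverse of $x$ (as in the proof of Theorem~\ref{xxx}). Second, $[x,e,e]=p_e(x)$ is the product of $x$ with the idempotent $e$, that is, the analogue of $\sigma_e(x)=x\vdash e$. Dually, $(e,x,e)$ is the left inverse of $x$ in $(H,\dashv_e)$, and $(e,e,x)$ is the analogue of $e\dashv x$. So the first equality of sets is the diheap translation of ``$e$ is central iff $x\vdash e=e\dashv x$ for all $x$''.

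For the first assertion I would first derive, from axioms (a)--(c) of Definition~\ref{dh} alone, the one-sided identities
\[
[e,[e,x,e],e]=[x,e,e],\qquad [e,[y,e,e],e]=[e,y,e],
\]
together with their duals $(e,(e,x,e),e)=(e,e,x)$ and $(e,(e,e,y),e)=(e,y,e)$. These say ``the inverse of the inverse is $x$ times the idempotent'' and ``multiplying by the idempotent does not change the inverse''. Each should follow from associativity and the (weak) Mal'tsev laws; alternatively, one can check them in a right group via \cite[Theorem~2.8]{AFMS2}.

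With these identities, the inclusion $\subseteq$ is immediate. If $[e,y,e]=(e,y,e)$ for all $y$, then
\[
[x,e,e]=[e,[e,x,e],e]=[e,(e,x,e),e]=(e,(e,x,e),e)=(e,e,x).
\]
For $\supseteq$ I would write $[e,x,e]=[e,[x,e,e],e]=[e,(e,e,x),e]$ and $(e,x,e)=(e,(e,e,x),e)$. It then remains to compare $[e,y,e]$ and $(e,y,e)$ for $y=(e,e,x)$, which is a mixed expression.

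This mixed step is the main obstacle. I would first try to close it with axiom (d), combined with the hypothesis applied to suitable elements. If that fails, I would fall back on a model: fix $f\in\I(H)$ and use the digroup $D=(H,\vdash_f,\dashv_f)$ of Theorem~\ref{xxx}, for which $F(D)=H$. There $[a,b,c]=a\vdash(b\backslash_\vdash c)$ and $(a,b,c)=(a/_{\!\dashv}b)\dashv c$, so both conditions become equalities in the right/left group structures of $D$. In $D$ one can use $D\cong G\times E$ (Theorem~\ref{1.1}(e)) to verify the equivalence directly.

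For the second assertion, fix $a$ and work in the digroup $(H,\vdash_a,\dashv_a)$. Its bar-units are its $\vdash_a$-idempotents (Proposition~\ref{3.9}(e)), and these are exactly the elements $q_a(x)=[a,x,x]=a\vdash_a(x\backslash x)$, so $\U(H,\vdash_a,\dashv_a)=E_a(H)$. By Proposition~\ref{3.9}(d), an element $e\in E_a(H)$ lies in $\I(H,\vdash_a,\dashv_a)$ iff $[x,a,e]=(e,a,x)$ for all $x$. For $e$ idempotent I will show $[x,a,e]=[x,e,e]$: since $a$ is a left identity, $a\vdash_a e=e=e\vdash_a e$, so $a\backslash e=e\backslash e$. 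Dually $(e,a,x)=(e,e,x)$. Hence, on $E_a(H)$, the condition is exactly the characterization of $\I(H)$ from the first part, which gives $\I(H,\vdash_a,\dashv_a)=\I(H)\cap E_a(H)$.
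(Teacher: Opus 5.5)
Your proof of ``$\subseteq$'' in the first equality is fine: the one-sided identities $[e,[e,x,e],e]=[x,e,e]$ and $[e,[y,e,e],e]=[e,y,e]$, and their duals, do hold in every right (resp.\ left) heap. The reverse inclusion, however, is never proved. You correctly flag the mixed step as the obstacle, and then stop there. Rewriting $[e,x,e]$ and $(e,x,e)$ through $y=(e,e,x)$ does not remove the difficulty; it only restricts the same mixed comparison to a subset of $H$. The fallback also fails as described. Theorem~\ref{1.1}(e) decomposes only the right group $(D,\vdash)$ as $G\times E$ and says nothing about $\dashv$ or how it interacts with $\vdash$. That interaction is exactly what a condition mixing $[-,-,-]$ and $(-,-,-)$ requires, so you would need a structure theorem for digroups, which is not available here. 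Since your second part derives $\I(H,\vdash_a,\dashv_a)\subseteq\I(H)$ from the first characterization, that inclusion is left open as well.

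The missing idea is already in your second part: take $a=e$. Then $e$ is a bar-unit of $(H,\vdash_e,\dashv_e)$, and $[e,x,e]$ and $(e,x,e)$ are the unique right and left inverses of $x$ relative to $e$. Moreover $[x,e,e]=x\vdash_e e$ and $(e,e,x)=e\dashv_e x$. So the first assertion is just Proposition~\ref{3.9}(d) for $(H,\vdash_e,\dashv_e)$, and this is the paper's proof.

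The direction you need is a cancellation argument using axiom~(d) of Definition~\ref{dh}. Put $r=[e,x,e]$, so that $[x,e,r]=[x,x,e]=e$. If $[y,e,e]=(e,e,y)$ for all $y$, then
\[
[x,e,(r,e,x)]=([x,e,r],e,x)=(e,e,x)=[x,e,e].
\]
Apply $[e,x,-]$ to both sides, noting that $[e,x,[x,e,w]]=[e,e,w]=w$. This gives $(r,e,x)=e$, whence $r=(r,e,e)=((r,e,x),x,e)=(e,x,e)$. The argument uses only compatibility and left cancellation, so it does not even require knowing beforehand that $(H,\vdash_e,\dashv_e)$ has a central idempotent.

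In your second part you also tacitly treat $(H,\vdash_a,\dashv_a)$ as a digroup for arbitrary $a$, in order to invoke Proposition~\ref{3.9}(d),(e). This is true: for $f\in\I(H)$, the map $x\mapsto(x,a,f)$ is an isomorphism onto $(H,\vdash_f,\dashv_f)$. It deserves a line, though, here as in the paper.
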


\begin{proof} For any element $e\in H$ we have that $e\in \I(H,[-,-,-],(-,-,-))$ if and only if $[e,x,e]=(e,x,e)$ for every $x\in H$, that is, if and only if the right/left inverses of $x$ with respect to $\vdash_e$ and $\dashv_e$ relatively to the left/right identity $e$ coincide. Here $a\vdash_e b=[a,e,b]$ and $a\dashv_e b=(a,e,b)$. Now we always have that $[e,e,e]=(e,e,e)=e$, i.e., $e\in \U(H,\vdash_e,\dashv_e)$. Therefore $e\in \I(H,[-,-,-],(-,-,-))$ if and only if 
$e\in \I(H,\vdash_e,\dashv_e)$, if and only if  $x\vdash_e e=e\dashv_e x$ for every $x\in H$ (Proposition~\ref{3.9}(d)). Now $e=e\backslash_{\vdash_e} e$, because $e=e \vdash_e e$, so that $[x,e,e]=x\vdash_e(e\backslash_{\vdash_e} e)=x\vdash_e e$. Similarly $(e,e,x)=e\dashv_e x$. Therefore $e\in \I(H,[-,-,-],(-,-,-))$ if and only if $[x,e,e]=(e,e,x)$ for every $x\in H$.

Now fix an element $a\in H$. Then $\I(H,\vdash_a,\dashv_a)$ is the set of all elements $x\in H$ such that $x\vdash_ax=x$ and every $t\in H$ has a simultaneous inverse with respect to $\vdash_a$ and $\dashv_a$ relatively to $x$. That is, $\I(H,\vdash_a,\dashv_a)$ is the set of all elements $x\in H$ such that $[x,a,x]=x$ and $[x,t,x]=(x,t,x)$ for every $t\in H$. Since $[x,a,x]=x$ if and only if $[a,x,x]=x$, if and only if $q_a(x)=x$, if and only if $x\in E_a$, we get that $$\I(H,\vdash_a,\dashv_a)=\I(H,[-,-,-], (-,-,-))\cap E_a(H).$$ \end{proof}

       A {\em pointed diheap} is a 4-tuple $$(H,[-,-,-], (-,-,-),h),$$ where $(H,[-,-,-], (-,-,-))$ is a diheap and $h$ is a fixed element of\linebreak $\I(H,[-,-,-], (-,-,-))$. Morphisms of pointed diheaps $$(H,[-,-,-], (-,-,-)),h)\to (H',[-,-,-], (-,-,-)),h')$$ are the diheap morphisms, that is, the mappings that respect both ternary operations $[-,-,-]$ and $ (-,-,-)$, that map $h$ to $h'$.
A {\em pointed digroup} is a 4-tuple  $(B,\vdash,\dashv,e)$, where $(B,\vdash,\dashv)$ is a digroup and $e$ is a fixed element of $\I(B,\vdash,\dashv,e)$. Morphisms of pointed right groups $(B,\vdash,\dashv,e)\to (B',\vdash,\dashv,e')$ are the digroup morphisms that map $e$ to $e'$.

\bigskip

Similarly to \cite[Theorem~2.7]{AFMS2}, we have that there is a category isomorphism $G$ of 
the category $\Dg_*$ of pointed digroups to the category $\DHp_*$ of pointed diheaps. It associates to every pointed digroup 
    $(D,\vdash,\dashv,e)$ the pointed diheap $(D,[-,-,-],(-,-,-),e)$, where $[-,-,-]$  and $(-,-,-)$ are the ternary operations on $D$ defined by $$[a,b,c]=a\vdash(b\backslash_{\vdash} c)\quad\mbox{\rm and}\quad (a,b,c)=(a/_{\dashv(} b)\dashv c$$ for all $a,b,c\in D$. The category isomorphism $G$ is the identity on morphisms.
This functor $G\colon \Dg_*\to\DHp_*$  and the functor $F\colon \Dg\to\DHp$ of Theorem~\ref{xxx} are related by the commutative square $$\xymatrix{
{\Dg_*}\ar[r]^{G}\ar[d] &{\DHp_*}\ar[d] \\
{\Dg}\ar[r]_F & \DHp, }$$ where the vertical arrows are the forgetful functors that forget the base points.

\begin{proposition} Let $H$ be a diheap. For every $e\in \I(H)$, the mapping $p_e\colon H\to H$, defined by $p_e(x)=[x,e,e]$ for every $x\in H$, is an idempotent diheap endomorphism of $H$. Its kernel is the congruence on $H$, that is, the equivalence relation on $H$ compatible with both ternary operations $[-,-,-]$ and $ (-,-,-)$, generated by the subset $\{\,([x,y,z],(x,y,z))\mid x,y,z\in H\,\}$ of $H\times H$. \end{proposition}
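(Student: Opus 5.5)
Fix $e\in\I(H)$ and pass to the digroup $(H,\vdash_e,\dashv_e)$, where $x\vdash_e y=[x,e,y]$ and $x\dashv_e y=(x,e,y)$. By the proof of Theorem~\ref{xxx}, this is a digroup with $e\in\I(H,\vdash_e,\dashv_e)$, and the diheap associated to it by $F$ is $H$ itself. The plan is to transfer the statement to this digroup and use the results already proved there.

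First I identify $p_e$. Since $e\vdash_e e=[e,e,e]=e$, we get $e\backslash_{\vdash_e}e=e$. Hence
$$p_e(x)=[x,e,e]=x\vdash_e(e\backslash_{\vdash_e}e)=x\vdash_e e=\sigma_e(x),$$
which is the map $r_{\vdash e}$ of Section~4. The proposition there on $r_{\vdash e}$ says it is a digroup endomorphism of $(H,\vdash_e,\dashv_e)$, because $e\in\I$. It is idempotent because $e$ is idempotent, and it fixes $e$. By remark (3) following Theorem~\ref{xxx}, an idempotent digroup endomorphism is a diheap endomorphism of $F(H,\vdash_e,\dashv_e)=H$. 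A direct check is also available: morphisms preserve $\vdash_e$ and $\dashv_e$, hence they preserve the left and right division operations, and so they preserve the ternary operations built from them. This proves that $p_e$ is an idempotent diheap endomorphism.

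For the kernel, let $\sim_e$ be the kernel of $p_e$, and let $\approx$ be the diheap congruence generated by the pairs $([x,y,z],(x,y,z))$. By remark (1) after Theorem~\ref{xxx}, diheap congruences on $H$ are exactly the digroup congruences of $(H,\vdash_e,\dashv_e)$. The proposition in Section~4 on the kernel of $\sigma_e$ says that $\sim_e$ is the digroup congruence generated by the pairs $(x\vdash_e y,\,x\dashv_e y)=([x,e,y],(x,e,y))$. These pairs are among the generators of $\approx$, so $\sim_e\subseteq\approx$.

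For the reverse inclusion $\approx\subseteq\sim_e$, it suffices to check $p_e([x,y,z])=p_e((x,y,z))$ for all $x,y,z$, since $\sim_e$ is a diheap congruence. I expect this to be the main obstacle. Since $p_e$ is a diheap morphism, the two sides are $[p_e x,p_e y,p_e z]$ and $(p_e x,p_e y,p_e z)$. The image of $p_e$ is $H\vdash_e e$, which is a subdigroup on which $\vdash_e$ and $\dashv_e$ coincide: it is the group $e\dashv_e H=H\vdash_e e$. On a group the two ternary operations coincide, both being $ab^{-1}c$. So it remains to check that the ternary operations restricted to the image are exactly those of this group, which follows because the image is closed under the division operations. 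This gives $\approx\subseteq\sim_e$ and hence equality.
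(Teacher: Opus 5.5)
Your proof is correct. Its middle step agrees with the paper's: $p_e([x,y,z])=p_e((x,y,z))$ because the image of $p_e$ is a group on which both ternary operations become $ab^{-1}c$. The other two steps take a different route.

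\textbf{Endomorphism property.} The paper stays in heap language. It uses the preceding lemma to write $p_e(x)=[x,e,e]=(e,e,x)$, and then applies the known fact that $x\mapsto[x,a,a]$ is an idempotent right heap endomorphism, together with its mirror statement for the left heap $(H,(-,-,-))$. You instead identify $p_e$ with $\sigma_e=r_{\vdash e}$ in the digroup $(H,\vdash_e,\dashv_e)$. You then apply the Section~4 criterion ($e\in\I$) and transport the result back through the functor $F$.

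\textbf{The inclusion $\ker p_e\subseteq\,\approx$.} You invoke the Section~4 description of $\ker\sigma_e$ as the digroup congruence generated by the pairs $(x\vdash_e y,\,x\dashv_e y)=([x,e,y],(x,e,y))$. The paper needs nothing of the sort. By axiom (b), $x=(x,e,e)\approx[x,e,e]=p_e(x)$, so $p_e(x)=p_e(y)$ immediately gives $x\approx y$.

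\textbf{What each approach buys.} Your route is a systematic transfer of digroup results to diheaps. As a byproduct it shows that $\ker p_e$ is already generated by the pairs with middle entry $e$. The paper's one-liner is much lighter: it is just the general fact that the kernel of an idempotent morphism $p$ is generated by the pairs $(x,p(x))$. It also shows that the even smaller family $([x,e,e],(x,e,e))$ suffices. So your appeal to the Section~4 kernel proposition, whose own proof is more involved, is valid but unnecessary.
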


\begin{proof} The mapping $p_e\colon H\to H$ is defined by $p_e(x)=[x,e,e]=(e,e,x)$ for every $x\in H$, and is an idempotent diheap endomorphism of $H$ by \cite[Proposition~2.11]{AFMS2} applied to the right heap $(H, [-,-,-])$, and by its right/left symmetric applied to the left heap $(H,(-,-,-))$. 

Let us show that the kernel of $p_e$ is the congruence $\gamma$ generated by the subset $\{\,([x,y,z],(x,y,z))\mid x,y,z\in H\,\}$ of $H\times H$. 

In order to show that the kernel of $p_e$ contains $\gamma$,  it suffices to show that $p_e([x,y,z])=p_e((x,y,z))$ for every $x,y,z\in H$.  Now, given the diheap $H$ and any element $e\in\I(H)$, so that $[x,e,e]=(e,e,x)$ for every $x\in H$, we can construct a digroup $(H,\vdash_e,\dashv_e)$ setting $x\vdash_e y=[x,e,y]$ and  $x\dashv_e y=(x,e,y)$ for every $x,y\in H$. With respect to these operations, $p_e\colon H\to H$ turns out an idempotent digroup endomorphism whose image $p_e(H)$ is a group, so that $\vdash_e$ and $\dashv_e$ coincide on  $p_e(H)$. Moreover $e=[e,e,e]=p_e(e)$ is the identity of the group $p_e(H)$. On the group $(p_e(H),\vdash_e,\dashv_e)=(p_e(H),\cdot,\cdot)$, we have that  $a\backslash_{\vdash_e} b=a^{-1}b$ for all $a,b\in p_e(H)$, so that, on $p_e(H)$, $[a,b,c]=a(b^{-1}c)$. It follows that $p_e([x,y,z])=[p_e(x),p_e(y),p_e(z)]=p_e(x)(p_e(y))^{-1}p_e(z)$. Similarly, on the group $p_e(H)$, we get that $a/_{\!\dashv_e} b=ab^{-1}$ for every $a,b\in p_e(H)$, so that, on $p_e(H)$, $(a,b,c)=(ab^{-1})c$. It follows that $$p_e((x,y,z))=(p_e(x),p_e(y),p_e(z))=p_e(x)(p_e(y))^{-1}p_e(z)=p_e([x,y,z]).$$ This shows that the kernel of $p_e$ contains $\gamma$.

To prove that the kernel of $p_e$ is contained in $\gamma$, we must show that, for every $x,y\in H$ with $p_e(x)=p_e(y)$, we have that $x\,\gamma\, y$. Here $\gamma$ is the congruence generated by  the set $\{\,([x,y,z],(x,y,z))\mid x,y,z\in H\,\}$. Now, on the quotient digroup $H/{\gamma}$, the two ternary operations $[-,-,-]$ and $(-,-,-)$ coincide, so that $H/{\gamma}$ is a heap. Hence in $H$ we see that $x\,\gamma\,[x,e,e]=p_e(x)=p_e(y)=[y,e,e]\,\gamma\, y$. Hence $x\,\gamma\, y$, as desired.
\end{proof}

\end{document}